\numberwithin{equation}{section}
\theoremstyle{plain}
\newtheorem{thm}{Theorem}[section]
\newtheorem{prop}[thm]{Proposition}
\newtheorem{lem}[thm]{Lemma}
\newtheorem*{referthmA}{Theorem A}
\newtheorem*{referthmB}{Theorem B}
\theoremstyle{definition}
\newtheorem{defn}[thm]{Definition}
\newtheorem{rem}[thm]{Remark}
\newtheorem{notation}[thm]{Notation}
\newcommand{\ichi}{\mathbf{1}}
\newcommand{\N}{\mathbb{N}}
\newcommand{\R}{\mathbb{R}}
\newcommand{\Z}{\mathbb{Z}}
\newcommand{\calM}{\mathcal{M}}
\newcommand{\calS}{\mathcal{S}}
\newcommand{\supp}{\mathrm{supp}\, }
\newcommand{\diam}{\, \mathrm{diam}\, }
\newcommand{\RomI}{\mathrm{I}}
\newcommand{\II}{\mathrm{I}\hspace{-0.5pt}\mathrm{I}}
\newcommand{\III}{\mathrm{I}\hspace{-0.5pt}\mathrm{I}\hspace{-0.5pt}\mathrm{I}}
\begin{document}

\title[Bilinear Fourier multipliers]
{On some bilinear Fourier multipliers with oscillating factors, II} 

\author[T. Kato]{Tomoya Kato}
\author[A. Miyachi]{Akihiko Miyachi}
\author[N. Shida]{Naoto Shida}
\author[N. Tomita]{Naohito Tomita}

\address[T. Kato]
{Faculty of Engineering, 
Gifu University, 
Gifu 501-1193, Japan}

\address[A. Miyachi]
{Department of Mathematics, 
Tokyo Woman's Christian University, 
Zempukuji, Suginami-ku, Tokyo 167-8585, Japan}

\address[N. Shida]
{Graduate School of Mathematics, 
Nagoya University, 
Furocho, Chikusa-ku, Nagoya 464-8602, Japan}

\address[N. Tomita]
{Department of Mathematics, 
Graduate School of Science, Osaka University, 
Toyonaka, Osaka 560-0043, Japan}

\email[T. Kato]{kato.tomoya.s3@f.gifu-u.ac.jp}
\email[A. Miyachi]{miyachi@lab.twcu.ac.jp}
\email[N. Shida]{naoto.shida.c3@math.nagoya-u.ac.jp}
\email[N. Tomita]{tomita@math.sci.osaka-u.ac.jp}

\date{\today}

\keywords{Bilinear oscillatory integral operators,
bilinear Fourier multipliers, bilinear H\"ormander symbol classes}
\thanks{This work was supported by JSPS KAKENHI, 
Grant Numbers 
23K12995 (Kato), 23K20223 (Miyachi), 
23KJ1053 (Shida), and 20K03700 (Tomita).}
\subjclass[2020]{42B15, 42B20}

\maketitle

\begin{abstract}
For $s > 0$, $s \neq 1$,
bilinear Fourier multipliers of the form 
$e^{i (|\xi|^s + |\eta|^s+ |\xi + \eta|^s)} \sigma (\xi, \eta)$ 
are considered, 
where $\sigma(\xi, \eta)$ belongs to the H\"ormander class $S^{m}_{1, 0}(\R^{2n})$.
A criterion for $m$ to ensure 
the $L^{\infty}\times L^{\infty} \to L^\infty$, 
$L^{1} \times L^{\infty} \to L^{1}$, and 
$L^{\infty} \times L^{1} \to L^{1}$
boundedness of the corresponding bilinear operators
is given.
\end{abstract}

\section{Introduction}

Throughout this paper, 
the letter $n$ denotes a positive integer.

This paper is a continuation of the authors' paper \cite{KMST-preprint}.
We are interested in bilinear Fourier multipliers of the form
\begin{align} \label{3-phase}
e^{i(\phi_1(\xi) + \phi_2(\eta) + \phi_3(\xi+\eta))} \sigma(\xi, \eta),
\quad
\sigma \in S^{m}_{1, 0}(\R^{2n}),
\end{align}
where $\phi_1, \phi_2$ and $\phi_3$ are real-valued smooth functions on $\R^n \setminus \{0\}$ (see Definition \ref{defn_Hormander} for the class $S^m_{1, 0}$).
In \cite{KMST-preprint},
the bilinear multipliers \eqref{3-phase} with $\phi_1 = \phi_2 = \phi_3 =|\cdot|$ 
are considered.
In the present paper, 
we focus on the bilinear Fourier multipliers
of the form \eqref{3-phase} with $\phi_1 = \phi_2 = \phi_3 = |\cdot|^{s}$,
$s > 0$, $s \neq 1$.
In what follows, we always assume that $s > 0$ and $s \neq 1$.
For related results in the case $s =1$, 
see Introduction of \cite{KMST-preprint}.

We briefly recall a result for linear Fourier multipliers.
Let $\theta = \theta(\xi)$ be a measurable function 
on $\R^n$ satisfying $|\theta(\xi)| \le c(1+ |\xi|)^{L}$
for some $c > 0$ and $L \in \R$.
Then the Fourier multiplier operator $\theta(D)$ is defined by
\begin{align*}
\theta(D)f(x)
=
\frac{1}{(2\pi)^n}
\int_{\R^n}
e^{ix \cdot \xi}
\theta(\xi)
\widehat{f}(\xi)
\,
d\xi
\end{align*}
for $f \in \calS(\R^n)$, where $\widehat{f}$ denotes the Fourier transform
of $f$.
We call the function $\theta$ multiplier.
For function spaces on $\R^n$ $(X, \|\cdot\|_X)$ and $(Y, \|\cdot\|_Y)$,
we say that $\theta$ is a {\it Fourier multiplier} for $X \to Y$
if there exists a positive constant $A$ such that 
\begin{align*}
\big\|
\theta(D) f
\big\|_{Y}
\le
A
\|f\|_{X}
\quad
\text{for all}
\quad
f \in \calS \cap X,
\end{align*}
and write $\theta(\xi) \in \calM(X \to Y)$.

We consider the following class.
\begin{defn} \label{defn_Hormander}
Let $d \in \N$ and $m \in \R$.
The class $S^m_{1, 0}(\R^d)$ is defined to be 
the set of all $\sigma = \sigma(\zeta) \in C^{\infty}(\R^d)$
satisfying
\begin{align*}
\big|
\partial_{\zeta}^{\alpha}
\sigma(\zeta)
\big|
\le
C_{\alpha}
(1+|\zeta|)^{m-|\alpha|}
\end{align*}
for all multi-indices $\alpha \in (\N_0)^d = \{0, 1, 2, \dots \}^d$.
\end{defn}

Then the following is known.
\begin{referthmA}
Let $s \neq 1$ and $1 \le p \le \infty$.
Then for every $\sigma \in S^m_{1, 0}(\R^n)$ with
$m = -ns|1/p -1/2|$ the function 
$e^{i|\xi|^s} \sigma(\xi)$ is a
{\it Fourier multiplier} for $H^p \to L^p$ if $1 \le p < \infty$,
and for $L^\infty \to BMO$ if $p = \infty$. 
\end{referthmA}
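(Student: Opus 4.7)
The plan is to combine a dyadic decomposition of $\sigma$ with stationary-phase analysis of the associated kernel, and then to pass to the full range of $p$ by interpolation and duality. Since every $\sigma\in S^{0}_{1,0}(\R^n)$ gives an $L^2$-bounded Fourier multiplier (with bound independent of the unimodular factor $e^{i|\xi|^s}$), analytic interpolation in a family of symbols with complex order reduces the task to two endpoints: the trivial $L^2\to L^2$ at $m=0$, and the critical $H^1\to L^1$ at $m=-ns/2$. The range $1<p\le 2$ then follows by Fefferman--Stein complex interpolation, $2\le p<\infty$ by duality on the symbol side, and $L^{\infty}\to\mathrm{BMO}$ is precisely the dual of $H^1\to L^1$.

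For the critical endpoint, perform a Littlewood--Paley decomposition $\sigma=\sigma_0+\sum_{j\ge1}\sigma_j$ with $\supp \sigma_j\subset\{|\xi|\sim 2^j\}$ and $|\partial^\alpha\sigma_j(\xi)|\lesssim 2^{j(m-|\alpha|)}$, and analyze the kernel
\begin{align*}
K_j(x)=(2\pi)^{-n}\int_{\R^n} e^{i(x\cdot\xi+|\xi|^s)}\sigma_j(\xi)\,d\xi.
\end{align*}
The phase $\Phi_x(\xi)=x\cdot\xi+|\xi|^s$ has a unique critical point $\xi^{\ast}$ determined by $x=-s|\xi^{\ast}|^{s-2}\xi^{\ast}$, which lies in $\supp \sigma_j$ exactly when $|x|\sim 2^{j(s-1)}$. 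Since the Hessian has size $2^{j(s-2)}$ there, the method of stationary phase yields
\begin{align*}
|K_j(x)|\lesssim 2^{j(m+n-ns/2)}
\qquad\text{on the shell } |x|\sim 2^{j(s-1)},
\end{align*}
while iterated integration by parts produces rapid decay off the shell. At $m=-ns/2$ this integrates to $\|K_j\|_{L^1(\R^n)}\lesssim 1$ uniformly in $j$.

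To upgrade these uniform kernel estimates to $H^1\to L^1$ boundedness, test against an $H^1$-atom $a$ supported in a ball $B(x_0,r)$ with $\|a\|_{L^\infty}\le r^{-n}$ and $\int a=0$, and split the $j$-sum at the threshold $2^{j(s-1)}\sim r$. One regime is handled by the Plancherel-type bound $\|\theta_j(D)a\|_{L^2}\lesssim 2^{jm}\|a\|_{L^2}$ combined with Cauchy--Schwarz over the enlarged ball of radius $\sim r+2^{j(s-1)}$; the other regime uses the decomposition $\theta_j(D)a(x)=\int[K_j(x-y)-K_j(x-x_0)]a(y)\,dy$ and exploits the smoothness of $K_j$ at the relevant scale together with the small diameter of the atom. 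Summing in $j$ yields a uniform-in-atom bound $\|\theta(D)a\|_{L^1}\lesssim 1$, hence $H^1\to L^1$ boundedness.

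The main obstacle is the sharpness of $m=-ns/2$: a naive summation of $\|K_j\|_{L^1}$ would lose a logarithmic factor, so one must genuinely exploit the interplay between the kernel's oscillation and the atom's vanishing moment. The stationary-phase step itself is routine once the shell geometry $|x|\sim 2^{j(s-1)}$ is pinpointed, and the low-frequency piece $\sigma_0$ causes no trouble because $|\xi|^s$ is smooth away from the origin. An alternative, more mechanical route is a Calder\'on--Zygmund/$TT^{\ast}$ argument verifying a scale-invariant H\"ormander regularity condition on the composite kernel; the exponent $m=-ns/2$ emerges there as the exact balance between the shell volume $2^{jn(s-1)}$ and the peak amplitude $2^{jn(1-s)}$ of $K_j$.
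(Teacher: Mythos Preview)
The paper does not prove Theorem~A; it is quoted as a background result due to Sj\"olin and Miyachi and is cited without proof. Your sketch follows the classical route from those references: stationary-phase analysis of the dyadic kernels $K_j$, an atomic $H^1$ argument at the critical order $m=-ns/2$, and complex interpolation plus duality for the remaining $p$. The overall strategy is correct and is essentially the one in the cited papers.

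One point in your write-up is backwards. You dismiss the low-frequency piece $\sigma_0$ on the grounds that ``$|\xi|^s$ is smooth away from the origin'', but $\sigma_0$ is exactly the piece whose support contains the origin, where $e^{i|\xi|^s}$ is \emph{not} smooth for non-integer $s$. This piece genuinely requires a separate argument. The relevant kernel bound is recorded in the present paper as Proposition~\ref{prop_phi_KMST1}: $\big|(e^{i|\xi|^s}\varphi(\xi)\theta(2^{-j}\xi))^{\vee}(x)\big|\lesssim (1+|x|)^{-(n+s)}$, which is integrable and handles $\sigma_0$ on every $L^p$.

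Your atomic argument is correct in outline but two details deserve a flag. First, the threshold $2^{j(s-1)}\sim r$ behaves oppositely in the two regimes $s<1$ and $s>1$ (the shell radius $2^{j(s-1)}$ tends to $0$ or to $\infty$ as $j\to\infty$), so the roles of the ``Plancherel'' and ``moment'' halves of the $j$-sum are interchanged; your sketch does not distinguish the cases. Second, the moment-condition step $\int[K_j(x-y)-K_j(x-x_0)]a(y)\,dy$ requires a pointwise gradient bound on $K_j$, which you have not stated; it follows from the same stationary-phase computation (an extra factor $2^{j}$ from differentiation), but should be made explicit to close the sum without the logarithmic loss you yourself warn about.
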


Theorem A is given by
Sj\"olin \cite{Sjolin-MathZ} and Miyachi \cite{Miyachi-multiplier}
for the case $s < 1$,
and by Miyachi \cite{Miyachi-multiplier}
for the case $s > 1$.
It is also proved in Sj\"ostrand \cite[Lemma 2.1 and Theorem 4.1]{Sjostrand} and Miyachi \cite{Miyachi-multiplier} 
that the number $-ns|1/p -1/2|$ is sharp if $s \neq 1$.

Next, we recall the definition of bilinear Fourier multipliers.
Let $\sigma = \sigma(\xi, \eta)$ be a measurable function 
on $\R^n \times \R^n$ 
satisfying $|\sigma(\xi, \eta)| \le c(1+ |\xi| + |\eta|)^{L}$
for some $c > 0$ and $L \in \R$.
Then the bilinear Fourier multiplier operator $T_{\sigma}$ is defined by
\begin{align*}
T_{\sigma}(f, g)(x)
=
\frac{1}{(2\pi)^{2n}}
\iint_{\R^n \times \R^n}
e^{ix \cdot (\xi + \eta)}
\sigma(\xi, \eta)
\widehat{f}(\xi)
\widehat{g}(\eta)
\,
d\xi d\eta
\end{align*}
for $x \in \R^n$ and $f, g \in \calS(\R^n)$.
Let $(X, \|\cdot\|_X)$, $(Y, \|\cdot\|_Y)$, and $(Z, \|\cdot\|_Z)$ 
be function spaces on $\R^n$.
We say that $\sigma$ is a {\it bilinear Fourier multiplier} for $X \times Y \to Z$
if there exists a positive constant $A$ such that 
\begin{align*}
\big\|
T_{\sigma}(f, g)
\big\|_{Z}
\le
A
\|f\|_{X}
\|g\|_{Y}
\quad
\text{for all}
\quad
f \in \calS \cap X
\;\;
\text{and}
\;\;
g \in \calS \cap Y,
\end{align*}
and write $\sigma(\xi, \eta) \in \calM(X \times Y \to Z)$.

The bilinear Fourier multipliers 
of the form \eqref{3-phase}
was studied by 
Bernicot and Germain \cite{BG-AM},
and
by
Bergfeldt, Rodr\'iguez-Lop\'ez, Rule and Staubach \cite{BRRS-TAMS}.
We say that a function $\phi$ is a {\it phase function of order $s$},
if $\phi$ is a real-valued smooth function on $\R^n \setminus \{0\}$
and satisfies the estimate 
\[
\big|
\partial^{\alpha}_{\xi} \phi(\xi)
\big|
\le
C_{\alpha} |\xi|^{s-|\alpha|},
\quad
\xi \neq 0,
\]
for all $\alpha \in (\N_0)^n$.
A typical example is the function $\phi = |\cdot|^s$.
The following is known.
\begin{referthmB}[{\cite[Theorem 1.4 and Remark 1.5]{BRRS-TAMS}}]
Let $s \neq 1$.
Let $\phi_1, \phi_2$ and $\phi_3$ be phase functions of order $s$.
Suppose that $1 \le p, q, r \le \infty$ and $1/p + 1/q = 1/r$.
Define $m_s(p, q) \in \R$ by
\[
m_s(p, q)
=
\begin{cases}
-ns\big(|1/p -1/2| + |1/q -1/2| \big)
&
\text{if} 
\;\;
\phi_3 \equiv 0,
\\
-ns\big(|1/p -1/2| + |1/q -1/2| + |1/r -1/2| \big)
&
\text{if} 
\;\;
\phi_3 \not\equiv 0.
\end{cases}
\] 
Then for every $\sigma \in S^m_{1, 0}(\R^{2n})$ with $m = m_s(p, q)$,
the function 
$e^{i(\phi_1(\xi) + \phi_2(\eta) + \phi_3(\xi + \eta))} \sigma(\xi, \eta)$
is a bilinear Fourier multiplier for $H^p \times H^q \to L^r$,
where $L^r$ should be replaced by $BMO$ when $r = \infty$.
\end{referthmB}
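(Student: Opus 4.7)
My plan is to attack the result by a Littlewood--Paley dyadic decomposition of $\sigma$ in $\R^{2n}$, kernel-level stationary phase estimates on each frequency-localized piece, and bilinear complex interpolation at the end. Pick $\psi \in C^\infty_c(\R^{2n})$ supported in $\{1/2 \le |(\xi, \eta)| \le 2\}$ with $\sum_{j \ge 0} \psi(2^{-j}(\xi, \eta)) = 1$ off a compact set, and set $\sigma_j = \psi(2^{-j} \cdot) \, \sigma$, $\tau_j = e^{i(\phi_1(\xi) + \phi_2(\eta) + \phi_3(\xi + \eta))} \sigma_j$. The aim is to establish
\[
\| T_{\tau_j}(f, g) \|_{Z_c} \lesssim 2^{-j \varepsilon} \| f \|_{X_c} \| g \|_{Y_c}
\]
for some $\varepsilon > 0$, uniformly in $j$, at a set of endpoint triples $(X_c, Y_c, Z_c)$ that cover the Banach triangle $1/p + 1/q = 1/r$ through complex interpolation. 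The natural corners are $(L^\infty, L^\infty, BMO)$, $(H^1, L^\infty, L^1)$, $(L^\infty, H^1, L^1)$ (with $m$ equal to the corresponding $m_s$); the last two will come from the first by bilinear duality, so the core of the argument is the $BMO$ endpoint. One might be tempted to factor $T_{\tau_j} = e^{i \phi_3(D)} \, T_{e^{i(\phi_1+\phi_2)} \sigma_j}$ and $T_{e^{i(\phi_1+\phi_2)} \sigma_j}(f, g) = T_{\sigma_j}(e^{i \phi_1(D)} f, e^{i \phi_2(D)} g)$, applying Theorem~A three times, but this reduces the matter to bounding $T_{\sigma_j}$ with $\sigma_j \in S^0_{1, 0}(\R^{2n})$, which is itself a non-trivial bilinear multiplier problem outside the reach of Theorem~A; I therefore intend to proceed via direct kernel analysis.

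For the $BMO$ corner, the $T1$-type kernel bound is derived from the oscillatory kernel
\[
K_j(x; y, z) = (2\pi)^{-2n} \iint e^{i \Psi(\xi, \eta; x, y, z)} \sigma_j(\xi, \eta) \, d\xi \, d\eta,
\quad
\Psi = (x - y) \cdot \xi + (x - z) \cdot \eta + \phi_1(\xi) + \phi_2(\eta) + \phi_3(\xi + \eta).
\]
After the rescaling $\xi = 2^j \xi'$, $\eta = 2^j \eta'$, the phase carries effective large parameter $2^{js}$ in its oscillatory part. The critical point equations $\nabla \phi_1(\xi) + \nabla \phi_3(\xi + \eta) = -(x - y)$ and $\nabla \phi_2(\eta) + \nabla \phi_3(\xi + \eta) = -(x - z)$ pin $(y, z)$ to a window of size $\sim 2^{j(s - 1)}$ around $x$; outside the window, repeated integration by parts yields rapid decay in $2^j \max(|x - y|, |x - z|)$. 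Inside, the $2n \times 2n$ block Hessian of $\Psi$, with diagonal blocks $\nabla^2 \phi_1 + \nabla^2 \phi_3$, $\nabla^2 \phi_2 + \nabla^2 \phi_3$ and off-diagonal block $\nabla^2 \phi_3$, has entries of size $2^{j(s - 2)}$ and, when non-degenerate, delivers a stationary-phase amplitude of $2^{-jns}$ per critical branch.

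The chief obstacle is that this Hessian degenerates near the anti-diagonal $\xi + \eta = 0$, where $\phi_3$ itself is not smooth; this degeneracy is the source of the extra $ns/2$ loss in the $\phi_3 \not\equiv 0$ case. I expect the hardest work will be a secondary dyadic decomposition of the shell $|(\xi, \eta)| \sim 2^j$ based on the comparable sizes of $|\xi|$, $|\eta|$, and $|\xi + \eta|$, with a van der Corput / partial-direction stationary phase analysis on each subregion to recover the correct amplitude when the Hessian rank drops. Matching the pointwise kernel bound on the subregions against the window volume $\sim 2^{2jn(s - 1)}$ then produces the claimed endpoint loss $m_s(\infty, \infty) = -3ns/2$ (and $-ns$ when $\phi_3 \equiv 0$, bypassing the anti-diagonal difficulty altogether). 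Summing the dyadic pieces is geometric thanks to the slack from $m < m_s$, and the borderline case $m = m_s$ is handled by a standard atomic argument for $BMO$.
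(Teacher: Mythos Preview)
Theorem~B is not proved in this paper; it is quoted from \cite{BRRS-TAMS} as a known result, so there is no in-paper argument to compare your sketch against.

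On your sketch itself: the factoring route you set aside is in fact the natural one, and it mirrors what this paper does for its own main theorem. After the Coifman--Meyer reduction of $\sigma$ to product symbols $c_j\,\theta_1(2^{-j}\xi)\,\theta_2(2^{-j}\eta)$ (Subsection~\ref{red_of_proof}), the trilinear form splits as a product of three \emph{linear} pieces $S_jf\cdot S_jg\cdot S_jh$ with $S_j=e^{i\phi_\ell(D)}\theta_\ell(2^{-j}D)$, and only scalar kernel estimates of the type collected in Section~\ref{Estimateforkernel} are needed; no genuinely bilinear stationary phase enters. Your objection that bounding $T_{\sigma}$ for $\sigma\in S^{0}_{1,0}(\R^{2n})$ is ``outside the reach of Theorem~A'' is literally true but moot: that bound is precisely the Coifman--Meyer theorem (together with its $H^p$/$BMO$ extensions), a standard input in this setting.

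Your direct $2n$-dimensional kernel route also carries a real gap at the generality of Theorem~B. The definition of ``phase function of order $s$'' used here imposes only the upper bounds $|\partial^{\alpha}\phi(\xi)|\le C_{\alpha}|\xi|^{s-|\alpha|}$, with no Hessian lower bound. Take for instance $\phi_1=\phi_2\equiv 0$ (admissible for every $s>0$) and $\phi_3$ arbitrary: the $2n\times 2n$ Hessian of your phase $\Psi$ then has all four $n\times n$ blocks equal to $\nabla^{2}\phi_3(\xi+\eta)$ and hence rank at most $n$ everywhere, not merely near the anti-diagonal. The full stationary-phase amplitude $2^{-jns}$ on which your endpoint counting relies is therefore unavailable in this regime, and the secondary decomposition you propose (based on the relative sizes of $|\xi|$, $|\eta|$, $|\xi+\eta|$) does not address this kind of global degeneracy.
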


In the case $\phi_3 \equiv 0$,
it is proved in \cite[Section 3]{BRRS-TAMS} and \cite[Theorem 1.4]{KMST-arXiv} 
that the condition $m = m_s(p, q) = -ns(|1/p -1/2| + |1/q -1/2|)$ 
is sharp in the ranges
$1 \le p, q \le 2$ and $2 \le p, q \le \infty$.
Kato--Miyachi--Shida--Tomita \cite{KMST-arXiv} 
recently improved Theorem B in the special case $\phi_1 = \phi_2 = |\cdot|^s$ and $\phi_3 \equiv 0$.
In this case, it is proved that 
the condition $m = m_s(p, q)$ given in Theorem B can be relaxed 
for $p < 2 < q $ or $q < 2 < p$.

We shall be interested in the case $\phi_3 \not\equiv 0$.
As proved in \cite[Section 3]{BRRS-TAMS},
using Plancherel's theorem, 
we see that the number $m_s(p, q)$ is sharp when $2 \le p, q \le \infty$ and $r = 2$.
To the best of the authors' knowledge, the sharpness has not been known 
in other cases so far.

In the present paper, we focus on the typical case 
where $\phi_1 = \phi_2 = \phi_3 = |\cdot|^s$, $s \neq 1$.
Then the purpose of this paper is to relax the condition $m = m_s(p, q)$
of Theorem B for the case $(p, q) = (1, \infty), (\infty, 1), (\infty, \infty)$.
The main result of the present article reads as follows.

\begin{thm} \label{mainthm}
Let $s \neq 1$.
Then for every $\sigma \in S^m_{1, 0}(\R^{2n})$
with
\[
m < m_{s} =
\begin{cases}
-{ns}/{2} -ns(1-s), & \textrm{if} \;\; 0 < s < 1, \\
-{ns}/{2}, & \textrm{if} \;\; 1 < s \le 2, \\
-n(s-1), & \textrm{if} \;\; 2 < s < \infty,
\end{cases}
\]
the function $e^{i(|\xi|^s + |\eta|^s + |\xi + \eta|^s)} \sigma(\xi, \eta)$
is a bilinear Fourier multiplier for $L^\infty \times L^\infty \to L^\infty$,
$L^1 \times L^\infty \to L^1$, and $L^\infty \times L^1 \to L^1$.
\end{thm}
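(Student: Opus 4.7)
All three boundedness statements reduce to proving $\|K\|_{L^1(\R^{2n})} < \infty$ for the kernel
\[
K := \mathcal{F}^{-1}\!\bigl[e^{i\Phi}\sigma\bigr], \qquad \Phi(\xi,\eta) := |\xi|^s + |\eta|^s + |\xi+\eta|^s.
\]
Indeed $T_M(f,g)(x) = \iint K(u,v)\, f(x-u)\, g(x-v)\, du\, dv$, and Minkowski's inequality then yields $\|T_M(f,g)\|_Z \le \|K\|_{L^1}\|f\|_X\|g\|_Y$ for each of the three endpoint triples $(X,Y,Z) \in \{(L^\infty,L^\infty,L^\infty), (L^1,L^\infty,L^1), (L^\infty,L^1,L^1)\}$. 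So the problem is to control $\|K\|_{L^1}$ under the hypothesis $m < m_s$.

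I would then perform a Littlewood--Paley decomposition $\sigma = \sum_{j \ge 0}\sigma_j$, with $\supp \sigma_j \subset \{|(\xi,\eta)| \sim 2^j\}$ for $j \ge 1$ and $|\partial^\alpha \sigma_j| \lesssim 2^{j(m-|\alpha|)}$, and write $K_j := \mathcal{F}^{-1}[e^{i\Phi}\sigma_j]$. The low-frequency piece $K_0$ is Schwartz and hence in $L^1$. The main task is the dyadic estimate
\[
\|K_j\|_{L^1(\R^{2n})} \lesssim 2^{j(m - m_s + \varepsilon)}, \qquad j \ge 1,
\]
for some $\varepsilon > 0$; summing over $j$ then gives $\|K\|_{L^1} < \infty$.

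The core is a stationary-phase analysis of $K_j$. Off the singular set $\{\xi = 0\} \cup \{\eta = 0\} \cup \{\xi+\eta = 0\}$ the Hessian of $\Phi$ is non-degenerate; for instance in dimension $n=1$ one computes
\[
\det \mathrm{Hess}\, \Phi = s^2(s-1)^2\bigl(|\xi|^{s-2}|\eta|^{s-2} + |\xi|^{s-2}|\xi+\eta|^{s-2} + |\eta|^{s-2}|\xi+\eta|^{s-2}\bigr),
\]
with an analogous non-vanishing identity in higher dimensions. After rescaling $(\xi,\eta) = 2^j(\xi,\eta)$ the integral defining $K_j$ has large parameter $2^{js}$ (since $s \neq 1$), amplitude of sup-norm $\lesssim 2^{jm}$ on the unit annulus, and phase $\Phi + U\xi + V\eta$ with $(U,V) = 2^{j(1-s)}(u,v)$. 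I would further partition $\sigma_j$ by the relative sizes of $|\xi|, |\eta|, |\xi+\eta|$, using the $S_3$-symmetry of $\Phi$ under $(\xi,\eta) \leftrightarrow (\eta,\xi)$ and $(\xi,\eta) \leftrightarrow (-\xi-\eta,\eta)$ to reduce the three near-singular configurations to one, and on each piece combine stationary-phase pointwise bounds with Plancherel, Cauchy--Schwarz and integration by parts in the transverse directions.

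The main obstacle is sharpening the straightforward estimate $\|K_j\|_1 \lesssim 2^{j(m+ns)}$---which follows from $\|K_j\|_\infty \cdot |\supp K_j|$ with $\|K_j\|_\infty \lesssim 2^{j(m + n(2-s))}$ and the critical-value-set volume $|\supp K_j| \lesssim 2^{2jn(s-1)}$---to the claimed $\|K_j\|_1 \lesssim 2^{j(m - m_s)}$, by exploiting the special ``triangular'' structure of $\Phi$ invariant under the $S_3$-symmetries above. In the regime $1 < s \le 2$ this refinement, together with the non-degeneracy of $\mathrm{Hess}\,\Phi$, delivers the exponent $m_s = -ns/2$. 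For $0 < s < 1$ the phase is only H\"older continuous on the singular set, and a cutoff at a carefully chosen scale $2^{j\theta}$ with the trivial bound on the inside piece and stationary phase outside produces exactly the additional loss $-ns(1-s)$. For $s > 2$ the radial eigenvalue $(s-1)|\zeta|^{s-2}$ of $\mathrm{Hess}(|\zeta|^s)$ dominates the tangential ones, so the image of $\nabla\Phi$ stretches anisotropically along the radial direction, and the resulting volume count yields the stricter exponent $n(s-1)$ in place of $ns/2$. With these regime-specific bounds on $\|K_j\|_1$ in hand, summation over $j \ge 1$ finishes the proof.
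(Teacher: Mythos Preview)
Your reduction to $\|K\|_{L^1(\R^{2n})}<\infty$ is valid---all three endpoint bounds do follow from it---but it is too strong a condition, and your key dyadic estimate $\|K_j\|_{L^1}\lesssim 2^{j(m-m_s)}$ is \emph{false}. Unlike the linear case, the bilinear norm $\|T\|_{L^\infty\times L^\infty\to L^\infty}$ is in general strictly smaller than $\|K\|_{L^1}$, because in
\[
\|T\|_{L^\infty\times L^\infty\to L^\infty}=\sup_{\|f\|_\infty,\|g\|_\infty\le 1}\Bigl|\iint K(u,v)\,f(-u)g(-v)\,du\,dv\Bigr|
\]
one tests $K$ only against tensor products $f\otimes g$, not against arbitrary $L^\infty(\R^{2n})$ functions. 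For a concrete failure take $s=2$, so $\Phi(\xi,\eta)=2|\xi|^2+2|\eta|^2+2\xi\cdot\eta$ is a nondegenerate quadratic form with Hessian $A$, and $\sigma_j=2^{jm}\chi(2^{-j}\cdot)$. Completing the square gives $|K_j(u,v)|\approx c\,2^{jm}|\chi(-A^{-1}(u,v))|$, hence $|K_j|\sim 2^{jm}$ on a set of volume $\sim 2^{2jn}$ and $\|K_j\|_{L^1}\sim 2^{j(m+2n)}$. But here $m_s=-n$, so your claimed bound is $2^{j(m+n)}$, off by a factor $2^{jn}$. The same stationary-phase computation for general $s$ gives $\|K_j\|_{L^1}\gtrsim 2^{j(m+ns)}$ from the nondegenerate part of the annulus, and $-ns<m_s$ for every $s>1$ (where the threshold is known to be sharp). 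In short, your ``straightforward estimate'' $\|K_j\|_1\lesssim 2^{j(m+ns)}$ is already essentially sharp for the $L^1$ norm of $K_j$; the $S_3$-symmetry cannot improve it.

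What the paper does instead is pass by duality to the trilinear form $\int T_\sigma(f,g)\,h\,dx$ and exploit the factorization of the phase: for product symbols $c_j\,\theta_1(2^{-j}\xi)\theta_2(2^{-j}\eta)$ one has
\[
\int T_{e^{i\Phi}\sigma_j}(f,g)\,h\,dx=c_j\int S_jf\cdot S_jg\cdot S_jh\,dx,\qquad S_jF=\bigl(e^{i|\cdot|^s}\theta_\ell(2^{-j}\cdot)\bigr)^\vee*F,
\]
one phase factor absorbed into each of $f,g,h$. The gain over the kernel bound now comes from \emph{local $L^2$ estimates}: after reducing $h$ to an $L^1$-atom supported in a ball of radius $r\le 1$, one splits $\R^n$ into annuli, on each annulus places two of the three factors in $L^2$ via H\"older, and uses Plancherel on a localized piece of $f$ (resp.\ $g$) to get $\|S_jf\|_{L^2(|x|\le A)}\lesssim A^{n/2}\|f\|_\infty$ for $s>1$ (resp.\ $A^{n(1-s)/2}$ for $s<1$). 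It is this $L^2$ mechanism---unavailable at the level of the bilinear kernel---that produces the exponent $m_s$.
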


Moreover, in the case $s > 1$, we reveal that
Theorem \ref{mainthm} cannot be 
extended to the case $m > m_s$.
In this sense, the order $m_s$ is optimal
for $s > 1$.
See Proposition \ref{prop_s>1} in Section \ref{sharpness}.

\begin{rem}
(1)
For every $n \ge 1$ and $s \neq 1$,
the number $m_s$ defined in Theorem \ref{mainthm}
is strictly bigger than the number given by
Theorem B;
the latter is $m_s(1, \infty) = m_s(\infty, 1) = m_s(\infty, \infty) = -3ns/2$.

(2)
Unfortunately, the authors of the present paper 
do not know what happens in the case $m = m_s$.

(3) In the case $s < 1$, the present authors 
do not know whether the order $m_s$ of 
Theorem \ref{mainthm} is optimal or not.
However, we shall prove a result which shows
that  the methods of the present paper cannot 
be extended to the case $m > m_s$.
See Proposition \ref{prop_s<1} in Section \ref{sharpness}.
\end{rem}

The rest of this paper is organized as follows.
In Section \ref{Estimateforkernel}, we prepare several estimates for the function of the form
$(e^{i|\xi|^s} \theta(2^{-j}\xi))^{\vee}$.
In Section \ref{proof_of_mainthm}, 
we introduce the way to reduce the proof of Theorem \ref{mainthm},
and give a proof of Theorem \ref{mainthm}.
In Section \ref{sharpness}, we consider the sharpness of Theorem \ref{mainthm}.

We end this section by preparing some notations 
used in the present paper.

\begin{notation}\label{notation}
We denote by $\N$ and $\N_0$ the set of positive integers and
the set of nonnegative integers, respectively.
We write the ball on $\R^n$ with center $x$ and radius $r$
as
$B(x, r) = \{y \in \R^n \mid |y-x| < r\}$.

The Fourier transform and 
the inverse Fourier transform 
are defined by 
\begin{align*}&
\widehat{f}(\xi)
=
\int_{\R^n}
e^{-i \xi \cdot x}
f(x)\, dx
\quad\textrm{and}\quad
(g)^{\vee}(x)
=
\frac{1}{(2\pi)^n} 
\int_{\R^n}
e^{i \xi \cdot x}
g(\xi)\, d\xi. 
\end{align*}

We prepare the dyadic partition of unity on $\R^n$. 
Take functions $\varphi, \psi \in C^\infty_0 (\R^n)$ 
such that 
$\supp \varphi \subset 
\{ |\xi| \le 2 \}$,
$\varphi = 1$ on 
$\{ |\xi| \le 1 \}$,
$\supp \psi \subset 
\{ 1/2 \leq |\xi| \leq 2 \}$,
and
$\varphi+\sum_{j\in\N} \psi ( 2^{-j} \cdot ) = 1$.
We will write
$\psi_{0} = \varphi$,
$\psi_j = \psi ( 2^{-j} \cdot )$ 
for $j \in \N$, 
and 
$\varphi_j = \varphi ( 2^{-j} \cdot )$ 
for $j \in \N_{0}$.
Then, we see that 
$\psi_0 = \varphi_0 = \varphi$ and
\begin{align*}
\sum_{j=0}^{k} \psi_j = \varphi_k,  
\quad k\in \N_0. 
\end{align*}
We define $\zeta = 1 - \varphi$.
Then $\zeta \in C^\infty(\R^n)$, 
$\partial^{\alpha} \zeta 
\in C^{\infty}_{0} (\R^n)$
for $|\alpha|\ge 1$,
$\zeta = \sum_{j \in \N} \psi_{j}$,
$\zeta =0$ on $\{ |\xi|\le 1 \}$,
and
$\zeta =1$ on $\{ |\xi|\ge 2 \}$.

For a smooth function $\theta$ on $\R^n$ and 
for $N \in \N_{0}$, we write 
$\|\theta\|_{C^N}=
\max_{|\alpha|\le N} 
\sup_{\xi} \big| \partial_{\xi}^{\alpha} \theta (\xi) \big|$. 

For $0 < p \le \infty$,
$H^p$ denotes the usual Hardy space,
and $h^p$ denotes the local Hardy space.
$BMO$ denotes the space of bounded mean oscillation
and $bmo$ denotes its local version.
It is known that $BMO$ is the dual space of $H^1$,
and that $bmo$ is the dual space of $h^1$.
See Stein \cite[Chapters III and IV]{Steinbook}
for $H^p$ and $BMO$,
and Goldberg \cite{Goldberg} 
for $h^p$ and $bmo$.

\end{notation}

\section{Estimates for kernels} \label{Estimateforkernel}

In this section, we shall collect several lemmas
concerning the function $(e^{i|\xi|^s} \theta(2^{-j} \xi))^{\vee}$
with $\theta \in C^\infty_0(\R^n)$.

\begin{prop}[{\cite[Proposition 3.1]{KMST-arXiv}}] \label{prop_psi_KMST1}
Let $s \neq 1$. Suppose that $\psi \in \calS(\R^n)$ satisfies
$\supp \psi \subset \{ 1/2 \le |\xi| \le 2 \}$.
Then, for any $N_{1}, N_{2}, N_{3} \ge 0$,
there exist $c>0$ and $M \in \N$
such that
\begin{equation*} 
\Big| \Big( 
e^{ i |\xi|^{s} } 
\psi (2^{-j} \xi ) 
\Big)^{\vee} (x)
\Big|
\le c\,
\| \psi \|_{ C^M }
\times
\begin{cases}
{2^{-jN_1}} ,
	& \text{if}\;\; {2^{j (1-s)} |x| < a}, \\
{2^{j(n-\frac{ns}{2})}} ,
	& \text{if}\;\; {a\le 2^{j (1-s)} |x| \le b}, \\
{2^{-jN_2}|x|^{-N_3}} ,
	& \text{if}\;\; {2^{j (1-s)} |x| >b} ,
\end{cases}
\end{equation*}
for $j \in \N_{0}$,
where
$a = s 4^{ -|1-s| }$
and 
$b = s 4^{ |1-s| }$.

If in addition $\psi (\xi)\neq 0$ for $2/3 \le |\xi| \le 3/2$, then 
there exist $c^{\prime} >0$ and 
$j_0 \in \N$ such that  
\begin{equation} \label{Kjxradial}
\begin{split}
&
\frac{1}{c^{\prime}}
2^{j(n - \frac{ns}{2})}\le 
\Big| \Big( 
e^{ i |\xi|^{s} } 
\psi (2^{-j} \xi ) 
\Big)^{\vee} (x)
\Big|
\le 
c^{\prime}
2^{j(n - \frac{ns}{2})}
\quad
\text{if}
\;\;
a^{\prime} \le 2^{j(1-s)}|x| \le b^{\prime} 
\;\; 
\text{and}
\;\;
j>j_0, 
\end{split}
\end{equation}
where
$a^{\prime} = s (3/2)^{ -|1-s| }$
and 
$b^{\prime} = s (3/2)^{ |1-s| }$.

\end{prop}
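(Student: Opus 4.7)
The plan is to analyze the oscillatory integral
\[
K_j(x) := \Big(e^{i|\xi|^s}\psi(2^{-j}\xi)\Big)^{\vee}(x)
= \frac{2^{jn}}{(2\pi)^n}\int_{\R^n} e^{i\Phi(\eta)}\psi(\eta)\, d\eta,
\qquad
\Phi(\eta) = 2^j x \cdot \eta + 2^{js}|\eta|^s,
\]
after the rescaling $\xi = 2^j\eta$. The gradient
$\nabla\Phi(\eta) = 2^j x + s\, 2^{js}|\eta|^{s-2}\eta$
vanishes on $\supp\psi$ exactly when $2^{j(1-s)}|x| = s|\eta|^{s-1}$ for some
$|\eta|\in[1/2, 2]$, and this critical range is strictly contained in $[a, b]$. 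The extra margin built into the choice of $a,b$ guarantees that in each of the non-critical regions $2^{j(1-s)}|x|<a$ and $2^{j(1-s)}|x|>b$, one of the two summands in $\nabla\Phi$ dominates the other by a fixed factor depending only on $s$.

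In the inner region $2^{j(1-s)}|x|<a$, the term $s\,2^{js}|\eta|^{s-2}\eta$ dominates, giving $|\nabla\Phi(\eta)|\gtrsim 2^{js}$ uniformly on $\supp\psi$, while $\partial^\alpha\Phi = O(2^{js})$ for $|\alpha|\ge 1$. Applying $N$ times the integration-by-parts operator
$L = \frac{1}{i}\,|\nabla\Phi|^{-2}\,\nabla\Phi\cdot\nabla$
(which satisfies $Le^{i\Phi}=e^{i\Phi}$) extracts a factor $2^{-jsN}$ at the cost of a constant times $\|\psi\|_{C^M}$ with $M=M(N)$. Since $s>0$, taking $N$ large absorbs the prefactor $2^{jn}$ and yields $\lesssim 2^{-jN_1}$ for any prescribed $N_1$. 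In the outer region $2^{j(1-s)}|x|>b$, the linear term $2^jx$ dominates, so $|\nabla\Phi(\eta)|\gtrsim 2^j|x|$; the same operator yields a factor $(2^j|x|)^{-N}$, from which the bound $2^{-jN_2}|x|^{-N_3}$ is recovered by distributing the exponent appropriately.

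In the intermediate region $a\le 2^{j(1-s)}|x|\le b$, the equation $\nabla\Phi = 0$ has a unique solution $\eta_\ast$ in a compact subset of $\{1/2\le|\eta|\le 2\}$. A direct computation shows that the Hessian of $|\eta|^s$ at $\eta_\ast$ has one radial eigenvalue $s(s-1)|\eta_\ast|^{s-2}$ and $(n-1)$ tangential eigenvalues $s|\eta_\ast|^{s-2}$, so
\[
\big|\det\nabla^2\Phi(\eta_\ast)\big|
= s^n\,|s-1|\,|\eta_\ast|^{n(s-2)}\,2^{jsn}
\asymp 2^{jsn},
\]
where non-degeneracy crucially uses $s\ne 1$. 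The stationary-phase method therefore yields
\[
\int_{\R^n} e^{i\Phi(\eta)}\,\psi(\eta)\, d\eta
= c_{s, n}\,2^{-jsn/2}\,e^{i\Phi(\eta_\ast)}\,\psi(\eta_\ast)
\;+\; O\!\left(2^{-jsn/2-js}\,\|\psi\|_{C^M}\right),
\]
with $c_{s, n}$ an explicit nonzero constant; multiplying by $2^{jn}$ gives the upper bound $|K_j(x)|\lesssim 2^{j(n-ns/2)}$. For the lower bound \eqref{Kjxradial}, the tighter hypothesis $a'\le 2^{j(1-s)}|x|\le b'$ confines $\eta_\ast$ to $\{2/3\le|\eta|\le 3/2\}$, where $|\psi(\eta_\ast)|$ is bounded below by a positive constant; for $j>j_0$ large enough, the $O(2^{-js})$ relative error is dominated by the leading term, and $|K_j(x)|\asymp 2^{j(n-ns/2)}$ follows.

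The main technical effort will be the stationary-phase expansion with uniform remainder control, and the bookkeeping that selects a single $M=M(N_1,N_2,N_3)$ absorbing all integration-by-parts constants and the remainders in the asymptotic expansion. The other subtle point is verifying that the buffer zones encoded in the constants $a,b,a',b'$ are wide enough to produce, on one side, an honest dominance of one summand of $\nabla\Phi$ over the other for integration by parts, and on the other side, a critical point that sits in the region where $\psi$ is quantitatively bounded below.
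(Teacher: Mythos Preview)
The paper does not give its own proof of this proposition; it is imported verbatim from \cite[Proposition~3.1]{KMST-arXiv} and used as a black box throughout. Your outline---rescale $\xi=2^j\eta$, integrate by parts in the two non-stationary zones according to which summand of $\nabla\Phi$ dominates, and apply stationary phase in the critical zone using that $s\ne 1$ makes the Hessian of $|\eta|^s$ nondegenerate---is the standard route and is almost certainly what the cited reference does.

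One small imprecision worth flagging: in the full middle range $a\le 2^{j(1-s)}|x|\le b$ the critical point $\eta_\ast$ satisfies $|\eta_\ast|^{s-1}=2^{j(1-s)}|x|/s\in[4^{-|1-s|},4^{|1-s|}]$, which places $|\eta_\ast|$ in $[1/4,4]$, not in $\{1/2\le|\eta|\le 2\}$ as you wrote. This does not damage the upper bound, since you may run stationary phase on the larger annulus with $\psi$ extended by zero; when $\eta_\ast\notin\supp\psi$ the leading coefficient $\psi(\eta_\ast)$ vanishes and the $O(2^{-jsn/2})$ remainder already gives the claimed estimate. For the lower bound \eqref{Kjxradial} you have handled this correctly by passing to the tighter window $[a',b']$, which forces $|\eta_\ast|\in[2/3,3/2]$ where $\psi$ is assumed nonvanishing.
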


\begin{prop}[{\cite[Estimate (4.8)]{KMST-arXiv}}] \label{prop_phi_KMST1}
Suppose that $\theta \in \calS(\R^n)$ satisfies
$\supp \theta \subset \{ |\xi| \le a \}$ for some $a > 0$,
and the function $\varphi$ is as in Notation \ref{notation}.
Then there exist $c > 0$ and $M \in \N$ such that
\begin{align*}
\Big| \Big( 
e^{ i |\xi|^{s} } 
\varphi(\xi)
\theta (2^{-j} \xi ) 
\Big)^{\vee} (x)
\Big|
\le c\,
\| \theta \|_{ C^M } \,
(1+|x|)^{ -(n+s) } ,
\quad
x \in \R^n,
\quad
j \in \N_0.
\end{align*} 
\end{prop}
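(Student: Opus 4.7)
The main difficulty is the non-smoothness of $|\xi|^s$ at the origin, which forbids unlimited integration by parts in $\xi$ on the whole ball. For $|x| \le 1$ the trivial bound
\[
\bigl| \bigl( e^{i|\xi|^s} \varphi(\xi) \theta(2^{-j}\xi) \bigr)^{\vee}(x) \bigr|
\le C \int_{|\xi|\le 2} |\theta(2^{-j}\xi)|\, d\xi
\le C \|\theta\|_{C^0}
\]
already gives the conclusion. For $|x| > 1$ the plan is to split
\[
e^{i|\xi|^s} \varphi(\xi) \theta(2^{-j}\xi)
= \varphi(\xi)\theta(2^{-j}\xi) + \bigl( e^{i|\xi|^s} - 1 \bigr) \varphi(\xi) \theta(2^{-j}\xi).
\]
The first summand has inverse Fourier transform that is Schwartz with seminorms controlled by those of $\theta$ (uniformly in $j$), hence decays faster than any $(1+|x|)^{-N}$, and the whole task reduces to the singular second summand, where the vanishing $e^{i|\xi|^s}-1 = O(|\xi|^s)$ at the origin must be exploited.

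\textbf{Dyadic decomposition near the origin.} I would apply a Littlewood--Paley decomposition $\varphi(\xi) = \sum_{k \le 1} \varphi(\xi)\psi(2^{-k}\xi)$, with only finitely many terms with $k \ge 0$ contributing since $\supp \varphi \subset \{|\xi| \le 2\}$. For $k = 0, 1$ the resulting pieces are supported away from the origin, so $e^{i|\xi|^s} - 1$ is smooth there and the corresponding inverse Fourier transforms are Schwartz. For $k \le -1$, we have $\varphi \equiv 1$ on $\supp \psi(2^{-k}\cdot)$, and the rescaling $\xi = 2^k\eta$ converts the $k$-th piece to
\[
\frac{2^{kn}}{(2\pi)^n} \int_{\R^n} e^{i 2^k x \cdot \eta} \bigl( e^{i 2^{ks}|\eta|^s} - 1 \bigr) \psi(\eta)\, \theta(2^{k-j}\eta)\, d\eta.
\]
The crucial observation is that on $\supp \psi \subset \{1/2 \le |\eta| \le 2\}$ and for $k \le 0$,
\[
\bigl| \partial_\eta^\alpha \bigl( e^{i 2^{ks}|\eta|^s} - 1 \bigr) \bigr| \le C_\alpha\, 2^{ks},
\]
since $|e^{i\phi}-1| \le |\phi|$ handles $\alpha = 0$, while each derivative of the phase $2^{ks}|\eta|^s$ contributes a factor of size $2^{ks}$ on this annulus. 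Also $\|\psi(\eta)\theta(2^{k-j}\eta)\|_{C^N} \le C_N \|\theta\|_{C^N}$ uniformly in $k \le 0$ and $j \ge 0$, since $2^{k-j} \le 1$.

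\textbf{Integration by parts and summation.} Integrating by parts $N$ times against the factor $e^{i 2^k x \cdot \eta}$ then yields
\[
\bigl| \text{$k$-th piece} \bigr|
\le C_N\, 2^{kn}\, 2^{ks}\, \min\bigl( 1,\, (2^k|x|)^{-N} \bigr)\, \|\theta\|_{C^N}.
\]
Splitting the sum at $k_0 \approx -\log_2 |x|$, the range $k \le k_0$ gives a geometric series bounded by $2^{k_0(n+s)} \sim |x|^{-(n+s)}$, while the range $k_0 < k \le -1$, with $N > n+s$, is likewise dominated by $|x|^{-(n+s)}$. Adding the rapidly decaying contributions from the first summand and from $k = 0, 1$ yields the desired bound $C \|\theta\|_{C^M} (1+|x|)^{-(n+s)}$ for $M$ a sufficiently large integer depending on $n$ and $s$. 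The main obstacle is structural: direct integration by parts in $\xi$ on $\{|\xi| \le 2\}$ would yield at best $|x|^{-n}$ decay, because the derivatives of $e^{i|\xi|^s}$ blow up like $|\xi|^{s-|\alpha|}$ at the origin; the gain of $|x|^{-s}$ comes precisely from the factor $2^{ks}$ per dyadic piece, which encodes the vanishing of $e^{i|\xi|^s}-1$ at the origin.
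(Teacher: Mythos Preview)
Your argument is correct. The paper itself does not prove this proposition; it merely cites \cite[Estimate (4.8)]{KMST-arXiv}, so there is no in-paper proof to compare against. Your approach---subtracting off the smooth piece $\varphi(\xi)\theta(2^{-j}\xi)$, then dyadically decomposing $(e^{i|\xi|^s}-1)\varphi(\xi)\theta(2^{-j}\xi)$ near the origin and exploiting the bound $|\partial_\eta^\alpha(e^{i2^{ks}|\eta|^s}-1)| \lesssim 2^{ks}$ on the unit annulus---is the standard and natural way to extract the extra factor $|x|^{-s}$ beyond the $|x|^{-n}$ that naive integration by parts would give, and it goes through as you outline. One small notational point: the $\psi$ in the paper's Notation~\ref{notation} is only guaranteed to satisfy $\varphi + \sum_{j\ge 1}\psi(2^{-j}\cdot)=1$, not a full homogeneous partition $\sum_{k\in\Z}\psi(2^{-k}\cdot)=1$; you should either take a separate full dyadic $\psi$ for this argument or note that $\sum_{k\le 0}\psi(2^{-k}\xi)=\varphi(2\xi)$ on $\{|\xi|\le 1\}$ up to a harmless adjustment. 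This does not affect the substance.
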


\begin{prop}[{\cite[Corollary 3.2]{KMST-arXiv}}] \label{prop_zeta_KMST1}
Suppose that $\theta \in \calS(\R^n)$ satisfies
$\supp \theta \subset \{ |\xi| \le a \}$ for some $a >0$,
and the function $\zeta$ is as in Notation \ref{notation}.
Then the following hold.
\begin{enumerate}
\item \label{Kj_phi_s<1}
Let $0< s < 1$ and $N\ge 0$.
Then,
there exist $c>0$ and $M \in \N$
such that
\begin{equation*}
\Big| \Big( 
e^{ i |\xi|^{s} } 
\zeta(\xi)
\theta (2^{-j} \xi ) 
\Big)^{\vee} (x)
\Big|
\le c\,
\| \theta \|_{ C^M }
\times
\begin{cases}
|x|^{- \frac{n}{2} - \frac{n}{2(1-s)} } ,
	& \text{if}\;\; |x| \le 1, \\
|x|^{ -N } ,
	& \text{if}\;\; |x| > 1 ,
\end{cases}
\end{equation*}
for all $j \in \N_{0}$.

\item \label{Kj_phi_s>1}
Let $1 < s < \infty$and $N\ge 0$.
Then,
there exist $C = C(a, s) > 0$, $c>0$ and $M \in \N$
such that
\begin{equation*} 
\Big| \Big( 
e^{ i |\xi|^{s} } 
\zeta(\xi)
\theta (2^{-j} \xi ) 
\Big)^{\vee} (x)
\Big|
\le c\,
\| \theta \|_{ C^M }
\times
\begin{cases}
\big( 1+|x| \big)^{- \frac{n}{2} + \frac{n}{2(s-1)} }
	& \text{for all}\;\; x \in \R^n, \\ 
	|x|^{ -N } ,
	& \text{if}\;\; |x| > C \, 2^{j(s-1)} ,
\end{cases}
\end{equation*}
for all $j \in \N_{0}$.
\end{enumerate}
\end{prop}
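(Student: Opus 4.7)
My plan is to reduce Proposition \ref{prop_zeta_KMST1} to the single-scale estimate of Proposition \ref{prop_psi_KMST1} via the dyadic decomposition $\zeta = \sum_{k \ge 1} \psi(2^{-k}\cdot)$. This gives
\[
\bigl( e^{i|\xi|^s} \zeta(\xi) \theta(2^{-j}\xi) \bigr)^{\vee}(x)
= \sum_{k=1}^{\infty} \bigl( e^{i|\xi|^s} \psi(2^{-k}\xi) \theta(2^{-j}\xi) \bigr)^{\vee}(x),
\]
and the support assumptions $\supp \theta \subset \{|\xi|\le a\}$, $\supp \psi \subset \{1/2 \le |\xi| \le 2\}$ force the $k$-th summand to vanish unless $k \le j+k_0$ for some $k_0 = k_0(a)$. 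I would then set $\tilde\psi_k(\eta) := \psi(\eta)\theta(2^{k-j}\eta)$, which is supported in $\{1/2 \le |\eta| \le 2\}$ and whose $C^M$-norm is bounded by a multiple of $\|\theta\|_{C^M}$ uniformly in $k \le j + k_0$ (since $2^{k-j}$ stays bounded). Using $\psi(2^{-k}\xi)\theta(2^{-j}\xi) = \tilde\psi_k(2^{-k}\xi)$ and applying Proposition \ref{prop_psi_KMST1} to $\tilde\psi_k$ with index $k$ in place of $j$, one obtains, for any prescribed $N_1, N_2, N_3 \ge 0$,
\[
|(e^{i|\xi|^s}\psi(2^{-k}\xi)\theta(2^{-j}\xi))^{\vee}(x)|
\le C\|\theta\|_{C^M}\cdot
\begin{cases}
2^{-kN_1}, & 2^{k(1-s)}|x| < a, \\
2^{k n(1-s/2)}, & a \le 2^{k(1-s)}|x| \le b, \\
2^{-kN_2}|x|^{-N_3}, & 2^{k(1-s)}|x| > b.
\end{cases}
\]

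The proof then concludes by summing over $k = 1, \dots, j+k_0$ and comparing $k$ to the critical scale $k_*$ at which $2^{k_*(1-s)}|x| \sim 1$. The middle regime is populated by only $O(1)$ indices near $k_*$, while the other two tails are summed geometrically. When $0 < s < 1$ the map $k \mapsto 2^{k(1-s)}|x|$ is increasing, so small $k$ lies in the first regime and large $k$ in the third; $k_* \sim \frac{1}{1-s}\log_2(1/|x|)$. For $|x| \le 1$ the middle regime produces $2^{k_* n(1-s/2)} \sim |x|^{-n/2-n/(2(1-s))}$, which is the announced bound, and the two tails are absorbed by choosing $N_1, N_3$ sufficiently large. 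For $|x| > 1$ the scale $k_*$ is out of range and only the third regime contributes nontrivially, yielding $|x|^{-N}$ on taking $N_3 = N$ and $N_2$ large. When $1 < s < \infty$ the monotonicity reverses and $k_* \sim \frac{1}{s-1}\log_2|x|$. If $|x| \le C 2^{j(s-1)}$, so that $k_* \le j+k_0$, the middle regime yields the main term $|x|^{n(2-s)/(2(s-1))} \sim (1+|x|)^{-n/2 + n/(2(s-1))}$; the third-regime tail (small $k$) sums to $\lesssim |x|^{-N_3}$ and the first-regime tail (large $k$, summed starting from $k \sim k_*$) to $\lesssim |x|^{-N_1/(s-1)}$, both of which can be forced below the main term for appropriate $N_1, N_3$. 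If $|x| > C 2^{j(s-1)}$, every $k \le j+k_0$ lies strictly in the third regime, and one gets $|x|^{-N_3}$ with $N_3$ arbitrary.

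The main technical obstacle is the case-by-case verification that the two tail regimes are actually dominated by the middle one, which boils down to matching the decay exponents $N_1, N_2, N_3$ of Proposition \ref{prop_psi_KMST1} against the target exponent $n(2-s)/(2|1-s|)$ via the definition of $k_*$. The bookkeeping is most delicate in the case $s > 2$, where the target $(1+|x|)^{-n/2 + n/(2(s-1))}$ actually decays in $|x|$: the crude $O(1)$ bound on the first-regime tail is then insufficient and must be sharpened to $|x|^{-N_1/(s-1)}$ by starting the geometric sum of $2^{-kN_1}$ at $k \sim k_*$ rather than at $k=1$. The resulting choices of $N_1, N_2, N_3$ in turn fix the number of derivatives $M$, which one finally takes to be the maximum arising from all subcases.
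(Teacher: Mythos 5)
Your proposal is correct and follows essentially the intended route: the paper itself quotes this statement from \cite[Corollary 3.2]{KMST-arXiv} without reproducing a proof, but there (and in this paper's proof of Lemma \ref{kernel_sneq1_norm}) the argument is exactly your decomposition $\zeta=\sum_{k}\psi(2^{-k}\cdot)$ truncated at $k\le j+k_0$, an application of Proposition \ref{prop_psi_KMST1} to $\psi(\cdot)\theta(2^{k-j}\cdot)$, and summation over $k$ against the critical scale. Your bookkeeping of the three regimes, including the sharpened tail estimate $|x|^{-N_1/(s-1)}$ needed when $s>2$, is accurate.
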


As a corollary of these propositions, 
we have the following.
 
\begin{lem} \label{kernel_sneq1}
Let $s \neq 1$. 
Suppose that $\theta \in \calS(\R^n)$ satisfies
$\supp \theta \subset \{ |\xi| \le a \}$ for some $a>0$.

\begin{enumerate}

\item 
If $s < 1$, then 
there exist $c>0$ and $M \in \N$
such that
\begin{equation*} 
\Big| \Big( 
e^{ i |\xi|^{s} } 
\theta (2^{-j} \xi ) 
\Big)^{\vee} (x)
\Big|
\le c\,
\| \theta \|_{ C^M }
\times
\begin{cases}
|x|^{- \frac{n}{2} - \frac{n}{2(1-s)} } ,
	& \text{if}\;\; |x| \le 1, \\
|x|^{ -(n+s) } ,
	& \text{if}\;\; |x| > 1,
\end{cases}
\end{equation*}
for all $j \in \N$.

\item
If $s > 1$, then there exist $C = C(a, s) > 0$, $c>0$ and $M \in \N$
such that
\begin{equation*} 
\Big| \Big( 
e^{ i |\xi|^{s} } 
\theta (2^{-j} \xi ) 
\Big)^{\vee} (x)
\Big|
\le c\,
\| \theta \|_{ C^M }
\times
\begin{cases}
\big( 1+|x| \big)^{- \frac{n}{2} + \frac{n}{2(s-1)} }
	& \text{for all}\;\; x \in \R^n , \\
|x|^{ -(n+s) } ,
	& \text{if}\;\; |x| > C \, 2^{j(s-1)} ,
\end{cases}
\end{equation*}
for all $j \in \N$.
\end{enumerate}

\end{lem}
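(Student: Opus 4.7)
The plan is to decompose the symbol into a low-frequency piece and a high-frequency piece using the partition $1 = \varphi + \zeta$ from Notation \ref{notation}, and then apply Propositions \ref{prop_phi_KMST1} and \ref{prop_zeta_KMST1} to each piece separately. Concretely, since $\varphi + \zeta = 1$ on $\R^n$, we write
\[
\Big(e^{i|\xi|^s}\theta(2^{-j}\xi)\Big)^{\vee}(x)
=
\Big(e^{i|\xi|^s}\varphi(\xi)\theta(2^{-j}\xi)\Big)^{\vee}(x)
+
\Big(e^{i|\xi|^s}\zeta(\xi)\theta(2^{-j}\xi)\Big)^{\vee}(x).
\]
Since $\supp\theta\subset\{|\xi|\le a\}$, the support hypotheses of Propositions \ref{prop_phi_KMST1} and \ref{prop_zeta_KMST1} are fulfilled for both pieces.

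For the first piece, Proposition \ref{prop_phi_KMST1} yields the bound $(1+|x|)^{-(n+s)}$, uniformly in $j \in \N_0$. For the second piece, we invoke Proposition \ref{prop_zeta_KMST1}: in the regime $s<1$, it gives $|x|^{-n/2-n/(2(1-s))}$ on $\{|x|\le 1\}$ and (choosing $N=n+s$) $|x|^{-(n+s)}$ on $\{|x|>1\}$; in the regime $s>1$, it gives $(1+|x|)^{-n/2+n/(2(s-1))}$ everywhere and $|x|^{-(n+s)}$ on $\{|x|>C\,2^{j(s-1)}\}$, with $C = C(a,s)$.

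It then only remains to combine the two estimates. For $s<1$ and $|x|\le 1$, the exponent $n/2+n/(2(1-s))$ is positive, so $|x|^{-n/2-n/(2(1-s))}\ge 1 \gtrsim (1+|x|)^{-(n+s)}$; thus the $\zeta$-contribution dominates. For $s<1$ and $|x|>1$, both contributions are $O(|x|^{-(n+s)})$, proving (1). For $s>1$, the inequality $-n/2+n/(2(s-1))\ge -(n+s)$ is immediate, so the $\zeta$-bound $(1+|x|)^{-n/2+n/(2(s-1))}$ absorbs the $\varphi$-bound everywhere; in the far field $|x|>C\,2^{j(s-1)}$ both contributions are $O(|x|^{-(n+s)})$, which proves (2). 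The $C^M$ norms on the right-hand sides are controlled by $\|\theta\|_{C^M}$ up to a multiplicative constant depending on $\varphi$ and $\zeta$ (i.e.\ on the fixed cutoff), so no additional loss appears.

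There is no serious obstacle here: once the decomposition is in place, the proof is a matter of choosing $N=n+s$ in the two source estimates and comparing the decay rates in the two relevant ranges of $|x|$. The only point requiring minor care is verifying that the $\varphi$-piece estimate from Proposition \ref{prop_phi_KMST1} is indeed dominated by the singular $\zeta$-piece estimate near the origin when $s<1$, which reduces to the elementary observation that the exponent of $|x|$ in the $\zeta$-bound is negative.
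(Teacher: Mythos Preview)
Your proposal is correct and follows exactly the same approach as the paper: the paper's proof consists of the single sentence ``Since $\zeta + \varphi = 1$, the claims immediately follow from Propositions \ref{prop_phi_KMST1} and \ref{prop_zeta_KMST1}.'' Your write-up simply spells out the comparison of decay rates that the paper leaves implicit.
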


\begin{proof}
Since $\zeta + \varphi = 1$, 
the claims immediately follow from
Propositions \ref{prop_phi_KMST1} and \ref{prop_zeta_KMST1}.
\end{proof}

\begin{lem} \label{kernel_sneq1_norm}
Let $s \neq 1$. 
Suppose that $\theta \in \calS(\R^n)$ satisfies
$\supp \theta \subset \{ |\xi| \le a \}$ for some $a>0$.
Then there exist $c>0$ and $M \in \N$
such that
\begin{equation*} 
\Big\| \Big( 
e^{ i |\xi|^{s} } 
\theta (2^{-j} \xi ) 
\Big)^{\vee}
\Big\|_{L^1}
\le c\,
\| \theta \|_{ C^M }
2^{j\frac{ns}{2}}
\end{equation*}
for all $j \in \N$.
Moreover, if $s < 1$, then there exist 
$c>0$ and $M \in \N$
such that
\begin{equation*} 
\Big\| \Big( 
e^{ i |\xi|^{s} } 
\theta (2^{-j} \xi ) 
\Big)^{\vee}
\Big\|_{L^\infty}
\le c\,
\| \theta \|_{ C^M }
2^{j(n - \frac{ns}{2})}
\end{equation*}
for all $j \in \N$.
\end{lem}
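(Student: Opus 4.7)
The plan is to reduce the problem to the annular estimates of Proposition \ref{prop_psi_KMST1} via a Littlewood--Paley decomposition in $\xi$, and then convert pointwise bounds into $L^1$- and $L^\infty$-norm bounds through a convolution factorization and Young's inequality. Since $\supp\theta(2^{-j}\cdot)\subset\{|\xi|\le a\,2^j\}$, the partition of unity $1=\varphi+\sum_{k\ge1}\psi_k$ induces the finite splitting
\[
e^{i|\xi|^s}\theta(2^{-j}\xi)=e^{i|\xi|^s}\varphi(\xi)\theta(2^{-j}\xi)+\sum_{k=1}^{K}e^{i|\xi|^s}\psi_k(\xi)\theta(2^{-j}\xi),
\]
where $K=K(j)\lesssim j$ is the largest integer for which $\psi_k\cdot\theta(2^{-j}\cdot)$ is not identically zero. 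The $k=0$ low-frequency piece is controlled directly by Proposition \ref{prop_phi_KMST1}: its inverse Fourier transform is dominated pointwise by $C\|\theta\|_{C^M}(1+|x|)^{-(n+s)}$, which contributes only $O(\|\theta\|_{C^M})$ to both the $L^1$- and $L^\infty$-norms.

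For each $k\ge 1$, I would fix $\tilde\psi\in C_c^\infty(\R^n)$ supported in an annulus slightly larger than $\supp\psi$ and equal to $1$ on $\supp\psi$, so that $\tilde\psi(2^{-k}\cdot)\equiv 1$ on $\supp\psi_k$, and factor
\[
e^{i|\xi|^s}\psi_k(\xi)\theta(2^{-j}\xi)=A_k(\xi)\,B_k(\xi),\qquad A_k(\xi):=e^{i|\xi|^s}\tilde\psi(2^{-k}\xi),\ \ B_k(\xi):=\psi_k(\xi)\theta(2^{-j}\xi),
\]
so that $(A_kB_k)^\vee=A_k^\vee\ast B_k^\vee$. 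The change of variable $\xi=2^k\eta$ rewrites $B_k$ as a rescaling of $\psi(\eta)\theta(2^{k-j}\eta)$; since $k\le K\lesssim j$ keeps the dilation factor $2^{k-j}$ bounded, this is a Schwartz function of compact annular support whose $C^M$-norm is dominated by $\|\theta\|_{C^M}$. Scale-invariance of the $L^1$-norm of the inverse Fourier transform then yields the uniform bound $\|B_k^\vee\|_{L^1}\lesssim\|\theta\|_{C^M}$.

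For $A_k$, Proposition \ref{prop_psi_KMST1} applied with $\psi=\tilde\psi$ and $j=k$ supplies the three-region pointwise bound whose dominant contribution lives on the middle annulus $\{a\le 2^{k(1-s)}|x|\le b\}$ of volume $\sim 2^{kn(s-1)}$ on which the bound is $2^{k(n-ns/2)}$. Choosing the decay exponents $N_1,N_2,N_3$ sufficiently large (depending on $s$) so that the inner and outer tails contribute at most $O(2^{kns/2})$ to the $L^1$-integral, one arrives at
\[
\|A_k^\vee\|_{L^1}\lesssim 2^{kn(s-1)}\cdot 2^{k(n-ns/2)}=2^{kns/2},\qquad \|A_k^\vee\|_{L^\infty}\lesssim 2^{k(n-ns/2)}.
\]
Young's inequality $\|A_k^\vee\ast B_k^\vee\|_{L^1}\le\|A_k^\vee\|_{L^1}\|B_k^\vee\|_{L^1}$ and $\|A_k^\vee\ast B_k^\vee\|_{L^\infty}\le\|A_k^\vee\|_{L^\infty}\|B_k^\vee\|_{L^1}$, combined with the geometric sums
\[
\sum_{k=1}^{K}2^{kns/2}\lesssim 2^{jns/2}\ \text{(using $s>0$)},\qquad\sum_{k=1}^{K}2^{k(n-ns/2)}\lesssim 2^{j(n-ns/2)}\ \text{(using $n-ns/2>0$ when $s<1$)},
\]
together with the $O(1)$ contribution of $k=0$, produce both claimed bounds.

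The main obstacle I expect is the bookkeeping in the estimate $\|A_k^\vee\|_{L^1}\lesssim 2^{kns/2}$: the threshold radius $2^{k(s-1)}$ separating the three regions of Proposition \ref{prop_psi_KMST1} behaves oppositely for $s<1$ and $s>1$, so a case-dependent choice of $N_1,N_2,N_3$ is required to ensure that neither the inner tail on $\{|x|<a\,2^{k(s-1)}\}$ nor the outer tail on $\{|x|>b\,2^{k(s-1)}\}$ exceeds the middle annulus in $L^1$-mass.
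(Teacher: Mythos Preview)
Your argument is correct and follows the same overall strategy as the paper: decompose via $\varphi+\sum_{k\ge1}\psi_k$, dispatch the low-frequency piece with Proposition~\ref{prop_phi_KMST1}, bound each annular piece through Proposition~\ref{prop_psi_KMST1}, and sum the resulting geometric series in $k$ up to $k\lesssim j$.

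The one point of divergence is in how Proposition~\ref{prop_psi_KMST1} is invoked on the $k$-th piece. You introduce an auxiliary $\tilde\psi$, factor $e^{i|\xi|^s}\psi_k(\xi)\theta(2^{-j}\xi)=A_k(\xi)B_k(\xi)$, and combine $\|A_k^\vee\|_{L^1}$ (resp.\ $\|A_k^\vee\|_{L^\infty}$) with $\|B_k^\vee\|_{L^1}$ via Young's inequality. The paper instead observes that $\psi(2^{-k}\xi)\theta(2^{-j}\xi)=[\psi(\cdot)\theta(2^{k-j}\cdot)](2^{-k}\xi)$ and applies Proposition~\ref{prop_psi_KMST1} directly with the single annular test function $\psi(\cdot)\theta(2^{k-j}\cdot)$, using that $\|\psi(\cdot)\theta(2^{k-j}\cdot)\|_{C^M}\lesssim\|\theta\|_{C^M}$ for $1\le k\le j+j_0$. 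This is slightly more economical---it avoids the auxiliary bump and the convolution step---but both routes land on the same per-piece bounds $2^{kns/2}$ and $2^{k(n-ns/2)}$, and both require exactly the same ``choose $N_1,N_2,N_3$ large enough'' integration over the three regions that you flag as the main bookkeeping point.
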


\begin{proof}
By Proposition \ref{prop_phi_KMST1}, we have
\begin{align*}
\Big\|
\Big(
e^{ i |\xi|^{s} } \varphi(\xi)\theta (2^{-j} \xi )
\Big)^{\vee}
\Big\|_{L^p} 
\le 
c \|\theta\|_{C^M},
\quad
1 \le p \le \infty.
\end{align*}
Hence it is sufficient to consider the $L^1$ or $L^\infty$ estimate 
of the function $(e^{i|\xi|^s} \zeta(\xi) \theta(2^{-j} \xi))^{\vee}$.

Let $j_0$ be an integer such that $2^{j_0} \ge a$.
We recall that the function $\zeta$ can be written as 
$\zeta = \sum_{k =1}^{j+j_0} \psi(2^{-k} \cdot)$ on $\supp \theta(2^{-j} \cdot)$.
By choosing $N_1$, $N_2$, and $N_3$ sufficiently large,
Proposition \ref{prop_psi_KMST1} yields that 
\begin{align*}
&
\Big\|
\Big( 
e^{ i |\xi|^{s} } 
\psi(2^{-k} \xi)
\theta (2^{-j} \xi ) 
\Big)^{\vee}
\Big\|_{L^1}
\le
c
\|\psi(\cdot)\theta(2^{k-j} \cdot)\|_{C^M}
2^{k \frac{ns}{2}}.
\end{align*}
Combining this with a simple estimate $\|\psi(\cdot) \theta(2^{k-j} \cdot)\|_{C^M} \lesssim \|\theta\|_{C^M}$ for $1 \le k \le j+j_0$, 
and summing over $1 \le k \le j+j_0$,
we have
\begin{align*} 
\Big\|
\Big(
e^{ i |\xi|^{s} } \zeta(\xi)\theta (2^{-j} \xi )
\Big)^{\vee}
\Big\|_{L^1} 
\le 
c \|\theta\|_{C^M}
2^{j\frac{ns}{2}},
\end{align*}
which completes the proof of the estimate for the $L^1$-norm.

Moreover, it follows from Proposition \ref{prop_psi_KMST1} that 
\begin{align*}
&
\Big\|
\Big( 
e^{ i |\xi|^{s} } 
\psi(2^{-k} \xi)
\theta (2^{-j} \xi ) 
\Big)^{\vee}
\Big\|_{L^\infty}
\le
c
\|\psi(\cdot)\theta(2^{k-j} \cdot)\|_{C^M}
2^{k(n - \frac{ns}{2})},
\end{align*}
which implies that
\begin{align*}
\Big\|
\Big(
e^{ i |\xi|^{s} } \zeta(\xi)\theta (2^{-j} \xi )
\Big)^{\vee}
\Big\|_{L^\infty}
\le
c
\|\psi(\cdot)\theta(2^{k-j} \cdot)\|_{C^M}\sum_{k=1}^{j+j_0}
2^{k(n - \frac{ns}{2})}.
\end{align*}
If $s < 1$, then the last sum can be estimated by $c 2^{j(n - \frac{ns}{2})}$,
and hence we obtain the desired estimate for $L^\infty$-norm.
We complete the proof of Lemma \ref{kernel_sneq1_norm}.
\end{proof}

\section{Proof of Theorem \ref{mainthm}} \label{proof_of_mainthm}

For a function $\tau = \tau(\xi, \eta)$,
the following equivalences are well known:
\begin{align*}
&
\tau(\xi, \eta) \in \calM(L^\infty \times L^\infty \to L^\infty)
\\
&
\iff
\tau(-\xi-\eta, \eta) \in \calM(L^1 \times L^\infty \to L^1)
\\
&
\iff
\tau(\xi, -\xi - \eta) \in \calM(L^\infty \times L^1 \to L^1).
\end{align*}
On the other hand, 
for the class $S^m_{1, 0}(\R^{2n})$,
the following equivalences hold:
\begin{align*}
\sigma(\xi, \eta) \in S^{m}_{1, 0}(\R^{2n})
\iff
\sigma(-\xi-\eta, \eta) \in S^{m}_{1, 0}(\R^{2n})
\iff
\sigma(\xi, -\xi - \eta) \in S^{m}_{1, 0}(\R^{2n}).
\end{align*}
Hence, 
the claims of Theorem \ref{mainthm}
regarding the boundedness
of
$L^\infty \times L^\infty \to L^\infty$,
$L^1 \times L^\infty \to L^1$,
and
$L^\infty \times L^1 \to L^1$
are equivalent to each other,
and to prove Theorem \ref{mainthm},
it is sufficient to prove one of them.
In this section, we shall prove that 
if $m < m_s$, then
\begin{align} \label{claim_general}
e^{i(|\xi|^s + |\eta|^s + |\xi + \eta|^s)}
\sigma(\xi, \eta)
\in
\calM(L^\infty \times L^\infty \to L^\infty)
\;\;
\text{for all}
\;\;
\sigma \in S^m_{1, 0}(\R^{2n}).
\end{align}

\subsection{Reduction of the proof} \label{red_of_proof}

In this subsection, we recall the argument
to reduce the proof of 
the claim \eqref{claim_general}
to the case of $\sigma$ of product forms.
The basic idea of this argument goes back 
to Coifman--Meyer \cite{CM-Asterisque};
the argument can also be found in \cite[Section 7.5.2]{Grafakos-modern}.

Firstly, it is sufficient to prove that 
there exist $c > 0$ and $M \in \N$,
such that the estimate
\begin{align} \label{est_red}
\big\|
e^{i(|\xi+\eta|^s + |\xi|^s + |\eta|^s)}
\widetilde{\sigma}(\xi, \eta)
\big\|_{\calM(L^\infty \times L^\infty \to L^\infty)}
\le
c
\|\theta_1\|_{C^M}
\|\theta_2\|_{C^M}
\end{align}
holds with $\widetilde{\sigma}$ either of the form
\begin{align} \label{sigma_case1}
\widetilde{\sigma}(\xi, \eta) 
= 
c_0
\theta_1(\xi)\theta_2(\eta)
\end{align}
or
\begin{align} \label{sigma_case2}
\widetilde{\sigma}(\xi, \eta) 
=
\sum_{j =1}^{\infty}
c_j
\theta_1(2^{-j} \xi) \theta_2(2^{-j} \eta),
\end{align}
where $\{c_j\}_{j \in\N_0}$ is a sequence of
complex numbers satisfying $|c_j| \le 2^{jm}$,
and $\theta_1, \theta_2 \in C^\infty_0(\R^n)$.
To confirm this, we use the partition of unity given in Notation \ref{notation}
and decompose $\sigma$ as
\begin{align*}
&
\sigma(\xi, \eta)
=
\sum_{j, k \in \N_0}
\sigma(\xi, \eta)
\psi_j(\xi) \psi_k(\eta)
=
\Big(
\sum_{j - k \ge 3}
+
\sum_{|j - k| \le 2}
+
\sum_{j-k \le -3}
\Big)
\sigma(\xi, \eta)
\psi_j(\xi) \psi_k(\eta)
\\
&
=
\sum_{j =3}^{\infty}
\sigma(\xi, \eta)
\psi_{j}(\xi) \varphi_{j-3}(\eta)
+
\sum_{\ell = -2}^{2}
\sum_{j = \max \{0, \ell\}}^{\infty}
\sigma(\xi, \eta)
\psi_{j}(\xi) \psi_{j- \ell}(\eta)
+
\sum_{k= 3}^{\infty}
\sigma(\xi, \eta)
\varphi_{k-3}(\xi)
\psi_{k}(\eta) 
\\
&
= :
\sigma_{\RomI}(\xi, \eta)
+
\sigma_{\II}(\xi, \eta)
+
\sigma_{\III}(\xi, \eta).
\end{align*}
We consider the multiplier $\sigma_{\RomI}$.
Since it holds that for $\sigma \in S^m_{1, 0}(\R^{2n})$
\begin{align*}
\big|
\partial^\alpha_{\xi}
\partial^\beta_{\eta}
\sigma(\xi, \eta)
\big|
\le
C_{\alpha, \beta}
2^{j(m -|\alpha| -|\beta|)}
\end{align*}
on a neighborhood of $\supp \psi_j \times \supp \varphi_{j-3}$,
using Fourier series expansion of period $(2 \pi 2^{j} \Z^n) \times (2\pi 2^{j-3} \Z^n)$, we can write $\sigma_{\RomI}$ as
\begin{align*}
\sigma_{\RomI}(\xi, \eta)
=
\sum_{j =3}^\infty
\sum_{a, b \in \Z^n}
c_{\RomI, j}(a, b)
e^{i a \cdot 2^{-j} \xi}
e^{i b \cdot 2^{-j+3} \eta}
\psi_{j}(\xi) \varphi_{j-3}(\eta),
\end{align*}
where the coefficient $c_{\RomI, j}(a, b)$ satisfy 
that for each $N >0$
\begin{align*}
|c_{\RomI, j}(a, b)|
\lesssim
2^{jm}
(1+|a|)^{-N}
(1+|b|)^{-N}.
\end{align*}
Hence, 
once we show that \eqref{est_red} holds with $\widetilde{\sigma}$ of the form \eqref{sigma_case2}, then we see that 
the estimate
\begin{align*}
\big\|
e^{i(|\xi+\eta|^s + |\xi|^s + |\eta|^s)}
\sigma_{\RomI}(\xi, \eta)
\big\|_{\calM(L^\infty \times L^\infty \to L^\infty)}
\lesssim
\sum_{a, b \in \Z^n}
(1+|a|)^{-N+M} 
(1+|b|)^{-N+M}
\end{align*}
holds for any $N > 0$. Therefore,
it follows  from choosing $N > M+n$
that the function $e^{i(|\xi+\eta|^s + |\xi|^s + |\eta|^s)}
\sigma_{\RomI}(\xi, \eta)$ is a bilinear Fourier multiplier 
for $L^\infty \times L^\infty \to L^\infty$.
Similarly,
we can show that 
$e^{i(|\xi+\eta|^s + |\xi|^s + |\eta|^s)}\sigma_{\II}(\xi, \eta)$
and 
$e^{i(|\xi+\eta|^s + |\xi|^s + |\eta|^s)}\sigma_{\III}(\xi, \eta)$
 are also bilinear Fourier multipliers 
for $L^\infty \times L^\infty \to L^\infty$.

Next, we recall the duality argument.
By duality, the inequality \eqref{est_red} is equivalent to the inequality
\begin{align*}
\Big|
\int
T_{e^{i(|\xi|^s + |\eta|^s + |\xi + \eta|^s)}
\widetilde{\sigma}(\xi, \eta)}(f, g)(x)
h(x)
\,
dx
\Big|
\le
c \|\theta_1\|_{C^M} \|\theta_2\|_{C^M}
\|f\|_{L^\infty}
\|g\|_{L^\infty}
\|h\|_{L^1}.
\end{align*}
The left hand side can be written as
\begin{align*}
\Big|
c_0
\int
T_{e^{i(|\xi|^s + |\eta|^s + |\xi + \eta|^s)}
\theta_1(\xi) \theta_2(\eta)}(f, g)(x)
h(x)
\,
dx
\Big|
\end{align*}
or
\begin{align*}
\Big|
\sum_{j}
c_j
\int
T_{e^{i(|\xi|^s + |\eta|^s + |\xi + \eta|^s)}
\theta_1(2^{-j}\xi) \theta_2(2^{-j}\eta)}(f, g)(x)
h(x)
\,
dx
\Big|.
\end{align*}
Take $\theta_3 \in C^\infty_0(\R^n)$ such that 
\begin{align*}
\theta_3(-\zeta)
=
1
\;\;
\text{on}
\;\;
\supp \theta_1 + \supp \theta_2.
\end{align*}
Then it holds that for $j \in \N_0$
\begin{align*}
\int
T_{e^{i(|\xi|^s + |\eta|^s + |\xi + \eta|^s)}
\theta_1(2^{-j}\xi) \theta_2(2^{-j}\eta)}(f, g)(x)
h(x)
\,
dx
=
\int
S_{j} f(x)
\cdot
S_{j} g(x)
\cdot
S_{j} h(x)
\,
dx
\end{align*}
where, with rude descriptions, we wrote 
\begin{align*}
S_{j} f = e^{i |D|^s} \theta_1(2^{-j}D) f , 
\quad
S_{j} g = e^{i |D|^s} \theta_2(2^{-j}D) g , 
\quad
S_{j} h = e^{i |D|^s} \theta_3(2^{-j}D) h .
\end{align*}
Hence, the estimate \eqref{est_red} 
for $\widetilde{\sigma}$ of \eqref{sigma_case1}
and \eqref{sigma_case2}
is equivalent to the following two inequalities:
\begin{align*} 
\bigg|
\int
S_{0} f(x)
\cdot
S_{0} g(x)
\cdot
S_{0} h(x)
\,
dx
\bigg|
\le
c \|\theta_1\|_{C^M} \|\theta_2\|_{C^M}
\|f\|_{L^\infty}
\|g\|_{L^\infty}
\|h\|_{L^1}
\end{align*}
and
\begin{align*}
&
\sum_{j=1}^{\infty}
2^{jm} 
\bigg|
\int
S_{j} f(x)
\cdot
S_{j} g(x)
\cdot
S_{j} h(x)
\,
dx
\bigg|
\le
c \|\theta_1\|_{C^M} \|\theta_2\|_{C^M}
\|f\|_{L^\infty}
\|g\|_{L^\infty}
\|h\|_{L^1}.
\end{align*}
Furthermore, these inequality can be derived from
\begin{align} \label{L1norm_sneq1_easy}
\big\|
S_{0} f
\cdot
S_{0} g
\cdot
S_{0} h
\big\|_{L^1}
\le
c \|\theta_1\|_{C^M} \|\theta_2\|_{C^M}
\|f\|_{L^\infty}
\|g\|_{L^\infty}
\|h\|_{L^1}
\end{align}
and
\begin{align} \label{L1norm_sneq1}
&
\sum_{j=1}^{\infty}
2^{jm} 
\big\|
S_{j} f
\cdot
S_{j} g
\cdot
S_{j} h
\big\|_{L^1}
\le
c \|\theta_1\|_{C^M} \|\theta_2\|_{C^M}
\|f\|_{L^\infty}
\|g\|_{L^\infty}
\|h\|_{L^1},
\end{align}
respectively.

To prove \eqref{L1norm_sneq1_easy} and \eqref{L1norm_sneq1}, 
we will use the kernels of the operators $S_j$.
We define
\[
K_j = \big(e^{i|\xi|^s} \theta_{\ell}(2^{-j} \xi)\big)^{\vee}, 
\quad
\ell = 1, 2, 3,
\]
so that $S_j F = K_j * F$.
It follows from Lemma \ref{kernel_sneq1_norm} that 
\begin{align}
&
\|S_j F\|_{L^1}
\lesssim
\big\|
K_j
\big\|_{L^1}
\|F\|_{L^1}
\lesssim
\|\theta_{\ell}\|_{C^M}
2^{j \frac{ns}{2}}
\|F\|_{L^1},
\label{L1toL1}
\\
&
\big\|
S_j F
\big\|_{L^\infty}
\le
\big\| K_j \big\|_{L^1}
\| F \|_{L^\infty}
\lesssim
\|\theta_{\ell}\|_{C^M}
2^{j \frac{ns}{2}}
\| F \|_{L^\infty},
\label{LinftytoLinfty}
\end{align}
and
\begin{align} 
\big\|
S_j F
\big\|_{L^\infty}
\le
\big\| K_j \big\|_{L^\infty}
\| F \|_{L^1}
\lesssim
\|\theta_{\ell}\|_{C^M}
2^{j ( n - \frac{ns}{2} )}
\| F \|_{L^1}
\quad
\text{if}
\;\;
s < 1.
\label{LinftytoL1}
\end{align}

We will repeatedly use these estimates.

The proof of \eqref{L1norm_sneq1_easy} is simple.
In fact, 
\eqref{L1toL1} and
\eqref{LinftytoLinfty}
yield that 
\begin{align*}
&
\big\|
S_{0} f
\cdot
S_{0} g
\cdot
S_{0} h
\big\|_{L^1}
\le
\big\|
S_{0} f
\big\|_{L^\infty}
\big\|
S_{0} g
\big\|_{L^\infty}
\big\|
S_{0} h
\big\|_{L^1}
\lesssim
\|\theta_1\|_{C^M}
\|\theta_2\|_{C^M}
\|f\|_{L^\infty}
\|g\|_{L^\infty}
\|h\|_{L^1}.
\end{align*}
Thus, in what follows, we mainly consider the proof of \eqref{L1norm_sneq1}.
In Subsections \ref{cases<1} and \ref{cases>1},
we will give the proof of \eqref{L1norm_sneq1}
under the assumption $m < m_s$
for $s <1$ and $s > 1$, respectively.

We make some reduction.
Firstly, it is sufficient to show that the inequality \eqref{L1norm_sneq1} 
holds with $M$ being the largest integer of $M$'s appearing in the claims of Lemmas \ref{kernel_sneq1} and \ref{kernel_sneq1_norm}.
For this $M$, we may assume that 
$\|\theta_1\|_{C^{M}} = \|\theta_2\|_{C^{M}} = 1$
by homogeneity.
Secondly, by the atomic decomposition of $L^1$ (see Appendix)
and translation invariance,
we may assume that 
$h \in L^1$
satisfies
\begin{align*}
\| h \|_{L^{\infty}} \le r^{-n}, 
\quad
\supp h \subset \{ x \mid |x| \le r \}, 
\quad
0 < r \le 1.
\end{align*}
Consequently, in order to obtain \eqref{L1norm_sneq1},
it is sufficient to prove that the estimate
\begin{align} \label{goal_sneq1}
\begin{split}
&
\sum_{j=1}^{\infty}
2^{jm} 
\big\|
S_{j} f
\cdot
S_{j} g
\cdot
S_{j} h
\big\|_{L^1}
\le
c 
\|f\|_{L^\infty}
\|g\|_{L^\infty}
\end{split}
\end{align}
holds under the above assumptions.

\subsection{Case $s < 1$.} \label{cases<1}

In this subsection, we shall give a proof of \eqref{goal_sneq1}
for $s < 1$. 

\begin{prop} \label{s<1}
Let $0 < s < 1$.
Then 
\eqref{goal_sneq1} holds
for $m < m_s = - ns/2 - ns(1-s)$.
\end{prop}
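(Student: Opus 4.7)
The plan is to bound $\|S_j f\cdot S_j g\cdot S_j h\|_{L^1}$ in a way that sums against $2^{jm}$ (for $m<m_s$) to an $r$-uniform constant. Since $h$ is an atom in $B(0,r)$ and the kernel $K_j=(e^{i|\xi|^s}\theta(2^{-j}\xi))^\vee$ is essentially concentrated on the ball of radius $2^{-j(1-s)}$ (Proposition \ref{prop_psi_KMST1} and Lemma \ref{kernel_sneq1}(1)), $S_jh$ has its bulk on $B(0,C\rho_j)$ with $\rho_j:=r+2^{-j(1-s)}$. Accordingly I split
\[
\|S_j f\cdot S_j g\cdot S_j h\|_{L^1} \le I_j+II_j,
\]
where $I_j$ is the $L^1$-integral on $B(0,C\rho_j)$ and $II_j$ on its complement. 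For $I_j$ I apply H\"older in the form $\|S_j f\|_{L^\infty}\|S_j g\|_{L^\infty}\|S_j h\|_{L^1(B(0,C\rho_j))}$: the first two factors are $\lesssim 2^{jns}$ by \eqref{LinftytoLinfty}, while Cauchy--Schwarz and Plancherel give $\|S_j h\|_{L^1(B(0,C\rho_j))}\le \rho_j^{n/2}\|S_j h\|_{L^2}\lesssim \rho_j^{n/2}\min(r^{-n/2},2^{jn/2})$, using that $|\widehat{K_j}|\lesssim 1$ is supported in $|\xi|\lesssim 2^j$ and $\|\widehat h\|_{L^\infty}\le \|h\|_{L^1}\le 1$.

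For $II_j$ I use Lemma \ref{kernel_sneq1}(1): for $|x|>C\rho_j$ and $y\in B(0,r)$ one has $|x-y|\ge c\rho_j\ge 2^{-j(1-s)}$, so $|K_j(x-y)|\lesssim |x|^{-n/2-n/(2(1-s))}$ when $|x|\le 1$ and $|x|^{-(n+s)}$ when $|x|>1$. Integrating in $x$ against $\|h\|_{L^1}$ yields $\int_{|x|>C\rho_j}|S_j h(x)|\,dx\lesssim \rho_j^{-ns/(2(1-s))}$, and combined with $\|S_j f\|_{L^\infty}\|S_j g\|_{L^\infty}\lesssim 2^{jns}$ this gives $II_j\lesssim 2^{jns}\rho_j^{-ns/(2(1-s))}$. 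I then sum over $j$, splitting at the critical index $j_\ast:=(1-s)^{-1}\log_2(1/r)$: for $j\ge j_\ast$ one has $\rho_j\sim r$ and the Plancherel bound in $I_j$ provides the favourable estimate; for $j<j_\ast$ we have $\rho_j\sim 2^{-j(1-s)}$ and one falls back on $\|S_j h\|_{L^1}\lesssim 2^{jns/2}$ from \eqref{L1toL1}.

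The main obstacle is matching exponents so that every contribution becomes uniform in $r\le 1$ at the threshold $m=m_s$. The naive $L^\infty\cdot L^\infty\cdot L^1$ bound produces $2^{3jns/2}$, overshooting the target by a factor $2^{-jns^2}$; the required saving must be extracted from the interplay of the Plancherel $L^2$-bound on $S_jh$ in $I_j$ and the pointwise kernel tail estimate in $II_j$. The delicate regime is where $r$ and $2^{-j(1-s)}$ are of comparable size, where the two estimates meet and the dyadic summation over $j$ must be arranged precisely so that the total is summable at the critical rate encoded by $m_s=-ns/2-ns(1-s)$.
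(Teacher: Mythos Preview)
Your approach does not reach the threshold $m_s$; it only reproduces the trivial bound $m<-3ns/2$. The problem is that you control $S_jf$ and $S_jg$ solely through $\|S_jf\|_{L^\infty}\lesssim 2^{jns/2}\|f\|_{L^\infty}$, paying a factor $2^{jns}$ from the two $L^\infty$ inputs together. With that factor fixed, the best your estimates on $S_jh$ can contribute is $2^{jns/2}$: indeed, carrying out your $I_j$ and $II_j$ bounds and splitting at $j_\ast$ gives $\sum_j 2^{jm}(I_j+II_j)\lesssim r^{-(m+3ns/2)/(1-s)}$, which blows up as $r\to 0$ whenever $m>-3ns/2$. Since $m_s=-ns/2-ns(1-s)=-3ns/2+ns^2$, you are short by exactly $ns^2$ in the exponent, for every $0<s<1$.

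The missing idea is a \emph{local} $L^2$ estimate on the $L^\infty$ inputs, uniform in $j$:
\[
\|S_jf\|_{L^2(|x|\le A)}\lesssim A^{\frac{n(1-s)}{2}}\|f\|_{L^\infty}\qquad(0<A\le 8).
\]
This is obtained by splitting $f=f\ichi_{\{|y|\lesssim A^{1-s}\}}+f\ichi_{\{|y|\gtrsim A^{1-s}\}}$, applying Plancherel to the first piece and the pointwise kernel bound of Lemma~\ref{kernel_sneq1}(1) to the second. The paper then applies H\"older in the form $L^2\cdot L^2\cdot L^\infty$ or $L^2\cdot L^\infty\cdot L^2$, pairing this local $L^2$ control of $S_jf$, $S_jg$ with either $\|S_jh\|_{L^\infty}\lesssim 2^{j(n-ns/2)}\|h\|_{L^1}$ from \eqref{LinftytoL1} (specific to $s<1$, and not used in your argument) or $\|S_jh\|_{L^2}\le\|h\|_{L^2}$, over a three-zone decomposition $\{|x|\le 2r\}\cup\{2r\le|x|\le 4\}\cup\{|x|\ge 4\}$. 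All of the extra $ns^2$ saving comes from this $j$-independent $L^2$ bound on $S_jf$, $S_jg$; refining the estimates on $S_jh$ alone cannot close the gap.
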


\begin{proof}
We divide the $L^1$-norm of \eqref{goal_sneq1} into 
\begin{align*}
\begin{split}
&
\big\|
\cdots
\big\|_{L^1(\R^n)}
=
\big\|
\cdots
\big\|_{L^1(|x| \le 2r)}
+
\big\|
\cdots
\big\|_{L^1(2r \le |x| \le 4)}
+
\big\|
\cdots
\big\|_{L^1(|x| \ge 4)}.
\end{split}
\end{align*}
In the following argument, we will prove 
\begin{align}
&
\big\|
S_j f \cdot S_j g \cdot S_j h
\big\|_{L^1(|x| \le 2r)}
\lesssim
2^{-jm_s}
\min 
\big\{
(2^{j(1-s)}r)^{n(1-s)},
(2^{j(1-s)}r)^{-\frac{ns}{2}}
\big\}
\|f\|_{L^\infty}
\|g\|_{L^\infty},
\label{L1first}
\\
&
\big\|
S_j f \cdot S_j g \cdot S_j h
\big\|_{L^1(2r \le |x| \le 4)}
\lesssim
2^{-jm_s}
\|f\|_{L^\infty}
\|g\|_{L^\infty},
\label{L1second}
\\
&
\big\|
S_j f \cdot S_j g \cdot S_j h
\big\|_{L^1(|x| \ge 4)}
\lesssim
2^{j\frac{ns}{2}}
\|f\|_{L^\infty}
\|g\|_{L^\infty}.
\label{L1third}
\end{align}
Once these estimates are proved, then \eqref{goal_sneq1} follows for $m < m_s$.

{\it Proof of\/} \eqref{L1first}.
Before proving \eqref{L1first}, we prepare the following estimate:
\begin{align} \label{keyL2est_s<1}
\big\|
S_j f
\big\|_{L^2(|x| \le A)}
\lesssim
A^{\frac{n(1-s)}{2}}
\|f\|_{L^\infty}
\quad
\text{if}\;\;
0 < A \le 8.
\end{align}
Here the implicit constant is depending only on $n$ and $s$.

To show \eqref{keyL2est_s<1}, we split $f$ into
\begin{align*} 
f = f \ichi_{\{ |y| \le CA^{1-s} \}} + f \ichi_{\{ |y| > CA^{1-s} \}}
= f_{A}^{0} + f_{A}^{1}
\end{align*}
with $C = 2 \cdot 8^s$.
First, Plancherel's theorem gives
\begin{align*}
\big\| S_{j} f_{A}^{0} \big\|_{L^2( |x| \le A )} 
\le
\big\| S_{j} f_{A}^{0} \big\|_{L^2(\R^n)} 
\le
\| f_{A}^{0} \|_{L^2(\R^n)} 
\lesssim
A^{\frac{n(1-s)}{2}}
\|f\|_{L^\infty}.
\end{align*}
Next, since
$|y| \approx |x-y|$
if $|x| \le A$ and $|y| > C A^{1-s} \ge C 8^{-s} A = 2A$,
we have
by Lemma \ref{kernel_sneq1} (1)
\begin{align*}
&
\big\| S_{j} f_{A}^{1} \big\|_{L^2( |x| \le A )} 
\le
\Big\| \int_{|y| > C A^{1-s}} 
\Big| \Big( e^{ i |\xi|^{s} } \theta (2^{-j} \xi ) \Big)^{\vee} (x-y) \Big| 
\, 
|f(y)| \,dy \Big\|_{L^2( |x| \le A )} 
\\
&
=
\Big\| 
\Big(
\int_{\substack{ |y| > C A^{1-s} \\ |x-y| \le 1 }} 
+
\int_{\substack{ |y| > C A^{1-s} \\ |x-y| \ge 1 }} 
\Big)
\Big| \Big( e^{ i |\xi|^{s} } \theta (2^{-j} \xi ) \Big)^{\vee} (x-y) \Big| 
\, 
|f(y)| \,dy \Big\|_{L^2( |x| \le A )} 
\\
&
\lesssim
\Big(
\Big\| 
\int_{|y| > C A^{1-s}} 
|y|^{-\frac{n}{2} - \frac{n}{2(1-s)}} 
\,
dy 
\Big\|_{L^2( |x| \le A )} 
+
\Big\| 
\int_{|x-y| > 1} 
|x-y|^{-(n+s)} 
\,
dy 
\Big\|_{L^2( |x| \le A )} 
\Big)
\|f\|_{L^\infty}
\\
&
\lesssim
\Big(
A^{\frac{n(1-s)}{2}}
+
A^{\frac{n}{2}}
\Big)
\|f\|_{L^\infty}
\approx
A^{\frac{n(1-s)}{2}}
\|f\|_{L^\infty}.
\end{align*}
Thus \eqref{keyL2est_s<1} is proved.

Now, we prove \eqref{L1first}.
By Plancherel's theorem, we have
\begin{align*}
\big\|
S_j h
\big\|_{L^2}
\le
\|h\|_{L^2}
\le
r^{-\frac{n}{2}}.
\end{align*}
Combining this,
\eqref{LinftytoLinfty},
and
\eqref{keyL2est_s<1} with $A = 2r$,
we have 
\begin{align*}
&
\big\|
S_j f \cdot S_j g \cdot S_j h
\big\|_{L^1(|x| \le 2r)}
\le
\big\|
S_j f
\big\|_{L^2(|x| \le 2r)}
\big\|
S_j g
\big\|_{L^\infty}
\big\|
S_j h
\big\|_{L^2}
\\
&
\lesssim
r^{\frac{n(1-s)}{2}}
\|f\|_{L^\infty}
\cdot
2^{j\frac{ns}{2}}
\|g\|_{L^\infty}
\cdot
r^{-\frac{n}{2}}
=
2^{-j(m_s + \frac{ns(1-s)}{2})}
(2^{j(1-s)}r)^{-\frac{ns}{2}}
\|f\|_{L^\infty}
\|g\|_{L^\infty}
\\
&
\le
2^{-jm_s}
(2^{j(1-s)}r)^{-\frac{ns}{2}}
\|f\|_{L^\infty}
\|g\|_{L^\infty}.
\end{align*}
On the other hand, 
by combining 
\eqref{LinftytoL1}
and
\eqref{keyL2est_s<1} with $A = 2r$,
it holds that 
\begin{align*}
&
\big\|
S_j f \cdot S_j g \cdot S_j h
\big\|_{L^1(|x| \le 2r)}
\le
\big\|
S_j f
\big\|_{L^2(|x| \le 2r)}
\big\|
S_j g
\big\|_{L^2(|x| \le 2r)}
\big\|
S_j h
\big\|_{L^\infty}
\\
&
\lesssim
r^{\frac{n(1-s)}{2}}
\|f\|_{L^\infty}
\cdot
r^{\frac{n(1-s)}{2}}
\|g\|_{L^\infty}
\cdot
2^{j(n - \frac{ns}{2})}
\|h\|_{L^1}
\lesssim
2^{-jm_s} 
(2^{j(1-s)}r)^{n(1-s)}
\|f\|_{L^\infty}
\|g\|_{L^\infty},
\end{align*}
which completes the proof of \eqref{L1first}.

{\it Proof of\/} \eqref{L1second}.
To prove \eqref{L1second}, we divide the $L^1$ norm as
\begin{align*}
\big\|
S_j f \cdot S_j g \cdot S_j h
\big\|_{L^1(2r \le |x| \le 4)}
\le
\sum_{ k \in \N : 2^k r \le 4}
\big\|
S_j f \cdot S_j g \cdot S_j h
\big\|_{L^1(\widetilde{E}_k)},
\end{align*} 
where 
\begin{align*}
\widetilde{E}_k = B(0, 2^{k+1} r) \setminus B(0, 2^k r)
= \{x \in \R^n \mid 2^{k}r \le |x| < 2^{k+1}r \}.
\end{align*}
Then, we shall prove  the following estimate: 
\begin{align} \label{Linftyest3_s<1}
\big\|
S_j h
\big\|_{L^\infty(\widetilde{E}_k)}
\lesssim
(2^{k} r)^{-\frac{n}{2} - \frac{n}{2(1-s)}}
\quad
\text{for $k \in \N$ satisfying $2^k r \le 4$},
\end{align}
where the implicit constant is independent of $j$, $k$ and $r$.

To prove \eqref{Linftyest3_s<1},
observe that $|x-y| \approx |x|$ 
if $x \in \widetilde{E}_k$, $k \in \N$, and $|y| \le r$.
Then by Lemma \ref{kernel_sneq1} (1)
\begin{align*}
&
\big\|
S_j h
\big\|_{L^\infty(\widetilde{E}_k)}
\le
\Big\|
\Big(
\int_{\substack{ |y| \le r \\ |x-y| \le 1}}
+
\int_{\substack{ |y| \le r \\ |x-y| \ge 1}}
\Big)
\Big|
\Big(
e^{i|\xi|^s}
\theta(2^{-j} \xi)
\Big)^{\vee}
(x-y)
\Big|
|h(y)|
\,
dy
\Big\|_{L^\infty(\widetilde{E}_k)}
\\
&
\lesssim
\Big\|
\int_{\substack{|y| \le r \\ |x-y| \le 1}}
|x|^{-\frac{n}{2} - \frac{n}{2(1-s)}}
|h(y)|
\,
dy
\Big\|_{L^\infty(\widetilde{E}_k)}
+
\Big\|
\int_{\substack{|y| \le r \\ |x-y| > 1}}
|x-y|^{-(n+s)}
|h(y)|
\,
dy
\Big\|_{L^\infty(\widetilde{E}_k)}
\\
&
\lesssim
(2^{k} r)^{-\frac{n}{2} - \frac{n}{2(1-s)}}
+
1
\approx
(2^{k} r)^{-\frac{n}{2} - \frac{n}{2(1-s)}}.
\end{align*}
Thus \eqref{Linftyest3_s<1} is proved.

Now, it follows from 
\eqref{LinftytoL1}
and
\eqref{keyL2est_s<1} with $A = 2^{k+1}r$
that
\begin{align*}
&
\big\|
S_j f \cdot S_j g \cdot S_j h
\big\|_{L^1(\widetilde{E}_k)}
\le
\big\|
S_j f
\big\|_{L^2(|x| \le 2^{k+1}r)}
\big\|
S_j g
\big\|_{L^2(|x| \le 2^{k+1}r)}
\big\|
S_j h
\big\|_{L^\infty}
\\
&
\lesssim
(2^k r)^{\frac{n(1-s)}{2}}
\|f\|_{L^\infty}
\cdot
(2^k r)^{\frac{n(1-s)}{2}}
\|g\|_{L^\infty}
\cdot
(2^j)^{n - \frac{ns}{2}}
=
2^{-jm_s}
(2^{j(1-s)} 2^k r)^{n(1-s)}
\|f\|_{L^\infty}
\|g\|_{L^\infty}.
\end{align*}
On the other hand, 
by 
\eqref{Linftyest3_s<1}
and
\eqref{keyL2est_s<1} with $A = 2^{k+1}r$,
\begin{align*}
&
\big\|
S_j f \cdot S_j g \cdot S_j h
\big\|_{L^1(\widetilde{E}_k)}
\le
\big\|
S_j f
\big\|_{L^2(|x| \le 2^{k+1}r)}
\big\|
S_j g
\big\|_{L^2(|x| \le 2^{k+1}r)}
\big\|
S_j h
\big\|_{L^\infty(\widetilde{E}_k)}
\\
&
\lesssim
(2^k r)^{n(1-s)-\frac{n}{2} - \frac{n}{2(1-s)}}
\|f\|_{L^\infty}
\|g\|_{L^\infty}
=
2^{-jm_s}
(2^{j(1-s)} 2^k r)^{n(1-s)-\frac{n}{2} - \frac{n}{2(1-s)}}
\|f\|_{L^\infty}
\|g\|_{L^\infty}.
\end{align*}
We remark that $n(1-s)-\frac{n}{2} - \frac{n}{2(1-s)} < 0$ if $0 < s < 1$.
Hence, these two estimates give
\begin{align*}
&
\sum_{k \in \N : 2^k r \le 4}
\big\|
S_j f \cdot S_j g \cdot S_j h
\big\|_{L^1(\widetilde{E}_k)}
\\
&
\lesssim
2^{-jm_s}
\Bigg(
\sum_{\substack{k \in \N : 2^k r \le 4 \\ 2^{j(1-s)}2^k r \le 1}}
(2^{j(1-s)} 2^k r)^{n(1-s)}
+
\sum_{\substack{k \in \N : 2^k r \le 4 \\ 2^{j(1-s)}2^k r > 1}}
(2^{j(1-s)} 2^k r)^{n(1-s)-\frac{n}{2} - \frac{n}{2(1-s)}}
\Bigg)
\|f\|_{L^\infty}
\|g\|_{L^\infty}
\\
&
\lesssim
2^{-jm_s}
\|f\|_{L^\infty}
\|g\|_{L^\infty}.
\end{align*}
This completes the proof of  \eqref{L1second}.

{\it Proof of\/} \eqref{L1third}.
To prove \eqref{L1third}, we split the $L^1$ as follows:
\begin{align*}
\big\|
S_j f \cdot S_j g \cdot S_j h
\big\|_{L^1(|x| \ge 4)}
=
\sum_{ k =2 }^{\infty}
\big\|
S_j f \cdot S_j g \cdot S_j h
\big\|_{L^1(E_k)},
\end{align*} 
where 
\begin{align} \label{annulus}
E_k = B(0, 2^{k+1}) \setminus B(0, 2^k)
= \{x \in \R^n \mid 2^{k} \le |x| < 2^{k+1} \}.
\end{align}
Then, it is enough to show the following two estimates:
\begin{align}
&
\big\|
S_j f \cdot S_j g
\big\|_{L^1(E_k)}
\lesssim
2^{kn}
2^{j\frac{ns}{2}}
\|f\|_{L^\infty}
\|g\|_{L^\infty},
\label{bilinearest1_s<1}
\\
&
\big\|
S_j h
\big\|_{L^{\infty}(E_k)}
\lesssim
2^{-k(n+s)}.
\label{Linftyest1_s<1}
\end{align}
In fact, these estimates yield 
\begin{align*}
&
\big\|
S_j f \cdot S_j g \cdot S_j h
\big\|_{L^1(|x| \ge 4)}
=
\sum_{k =2}^\infty
\big\|
S_j f \cdot S_j g \cdot S_j h
\big\|_{L^1(E_k)}
\\
&
\lesssim
\sum_{k =2}^\infty
\big\|
S_j f \cdot S_j g
\big\|_{L^1(E_k)}
\big\|
S_j h
\big\|_{L^\infty(E_k)}
\lesssim
2^{j\frac{ns}{2}}
\Big(
\sum_{k =2}^\infty
2^{-ks}
\Big)
\|f\|_{L^\infty}
\|g\|_{L^\infty}
\lesssim
2^{j\frac{ns}{2}}
\|f\|_{L^\infty}
\|g\|_{L^\infty},
\end{align*}
and thus \eqref{L1third} will follow.

We now prove \eqref{bilinearest1_s<1}.
For each $k \ge 2$, 
we decompose $f$ as
\[
f = f \ichi_{\{|x| \le 2^{k+2}\}} + f \ichi_{\{|x| > 2^{k+2}\}}
=
f_k^{0} + f_k^{1}.
\]
It follows from Plancherel's theorem that 
\begin{align}\label{L2est1_s<1}
\big\|
S_j f^0_k
\big\|_{L^2(E_k)}
\le
\big\|
S_j f^0_k
\big\|_{L^2(\R^n)}
\le
\|f^0_k\|_{L^2}
\lesssim
2^{k\frac{n}{2}}
\|f\|_{L^\infty}.
\end{align}
On the other hand,
from the estimates $|x-y| \ge 1$ and
$|x-y| \approx |y|$ for $x \in E_k$ and $|y| \ge 2^{k+2} \ge 1$,
and from Lemma \ref{kernel_sneq1} (1), 
it follows that
\begin{align} \label{Linftyest2_s<1}
\begin{split}
\big\|
S_j f^1_k
\big\|_{L^\infty(E_k)}
&
\le
\Big\|
\int_{|y| \ge 2^{k+2}}
\Big|
\Big(
e^{i|\xi|^s}
\theta(2^{-j} \xi)
\Big)^{\vee}
(x-y)
\Big|
|f(y)|
\,
dy
\Big\|_{L^\infty(E_k)}
\\
&
\lesssim
\Big\|
\int_{|y| \ge 2^{k+2}}
|y|^{-(n+s)}
\,
dy
\Big\|_{L^\infty(E_k)}
\|f\|_{L^\infty}
\lesssim
\|f\|_{L^\infty}.
\end{split}
\end{align}
Hence, \eqref{L2est1_s<1} gives
\begin{align*}
\big\|
S_j f_k^0 \cdot S_j g
\big\|_{L^1(E_k)}
&
\lesssim
2^{k\frac{n}{2}}
\big\|
S_j f_k^0
\big\|_{L^2} 
\big\|
S_j g 
\big\|_{L^\infty}
\lesssim
2^{kn}
2^{j\frac{ns}{2}}
\|f\|_{L^\infty}
\|g\|_{L^\infty},
\end{align*}
and \eqref{Linftyest2_s<1} gives
\begin{align*}
\big\|
S_j f_k^1 \cdot S_j g
\big\|_{L^1(E_k)}
&
\le
2^{kn}
\big\|
S_j f_k^1
\big\|_{L^\infty(E_k)}
\,
\big\|
S_j g 
\big\|_{L^\infty}
\lesssim
2^{kn}
2^{j\frac{ns}{2}}
\|f\|_{L^\infty}
\|g\|_{L^\infty}.
\end{align*}
Combining these estimates, we obtain \eqref{bilinearest1_s<1}.

We next give a proof of \eqref{Linftyest1_s<1}.
Since $|x-y| \ge 1$ and $|x-y| \approx |x|$ for $x \in E_k$ and $|y| \le r \le 1$,
we have by Lemma \ref{kernel_sneq1} (1)
\begin{align*}
\big\|
S_j h
\big\|_{L^{\infty}(E_k)}
&
\le
\Big\|
\int_{|y| \le r}
\Big|
\Big(
e^{i|\xi|^s}
\theta(2^{-j} \xi)
\Big)^{\vee}
(x-y)
\Big|
|h(y)|
\,
dy
\Big\|_{L^{\infty}(E_k)}
\\
&
\lesssim
\Big\|
\int_{|y| \le r}
|x|^{-(n+s)}
|h(y)|
\,
dy
\Big\|_{L^{\infty}(E_k)}
\lesssim
2^{-k(n+s)}.
\end{align*}
Thus \eqref{Linftyest1_s<1} is proved.
We complete the proof of Proposition \ref{s<1}.
\end{proof}

\subsection{Case $s > 1$} \label{cases>1}

In this subsection, we shall give a proof of \eqref{goal_sneq1}
for $s > 1$.

\begin{prop} \label{thm_s>1}
Let $1 < s < \infty$.
Then 
\eqref{goal_sneq1} holds for 
\[
m < m_{s} =
\begin{cases}
-{ns}/{2}, & \textrm{if} \;\; 1 < s \le 2, \\
-n(s-1), & \textrm{if} \;\; 2 < s < \infty.
\end{cases}
\]
\end{prop}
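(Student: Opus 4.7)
The plan mirrors the structure of the proof of Proposition~\ref{s<1}, but with the spatial scale $2^{j(1-s)}$ replaced by its reciprocal $R := 2^{j(s-1)}$, which now grows with $j$. Since $r\le 1\le R$ for $j\ge 1$, the support of $h$ sits well inside the ball of radius $R$. Let $C$ denote the constant from Lemma~\ref{kernel_sneq1}(2). I would split
\[
\bigl\|S_j f \cdot S_j g \cdot S_j h\bigr\|_{L^1(\R^n)}
=
\bigl\|\cdots\bigr\|_{L^1(|x|\le 2CR)}
+
\bigl\|\cdots\bigr\|_{L^1(|x|\ge 2CR)},
\]
separating the ``inner cone'' of the kernel from the region where the rapid decay $|K_j(x)|\lesssim |x|^{-(n+s)}$ is available.

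The first step is to establish a localized $L^2$ analog of \eqref{keyL2est_s<1}, namely
\[
\bigl\|S_j F\bigr\|_{L^2(|x|\le A)}
\lesssim (A+R)^{n/2}\,\|F\|_{L^\infty}, \qquad A > 0.
\]
Split $F = F\ichi_{\{|y|\le C'(A+R)\}} + F\ichi_{\{|y|> C'(A+R)\}} =: F_A^0 + F_A^1$. The first piece is controlled by Plancherel, yielding $(A+R)^{n/2}\|F\|_{L^\infty}$. For the second, the conditions $|x|\le A$ and $C'$ large force $|x-y|>C\cdot 2^{j(s-1)}$, so the rapid-decay part of Lemma~\ref{kernel_sneq1}(2) gives $|K_j(x-y)|\lesssim |y|^{-(n+s)}$, whence $\|S_j F_A^1\|_{L^\infty(|x|\le A)}\lesssim (A+R)^{-s}\|F\|_{L^\infty}$; this is absorbed into $(A+R)^{n/2}\|F\|_{L^\infty}$ after integrating over $|x|\le A$.

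The near part is then treated by H\"older $L^2 \cdot L^2 \cdot L^\infty$ with $h$ in the $L^\infty$ slot: the localized $L^2$ bound gives $\|S_j f\|_{L^2(|x|\le 2CR)}\|S_j g\|_{L^2(|x|\le 2CR)} \lesssim R^n\|f\|_{L^\infty}\|g\|_{L^\infty} = 2^{jn(s-1)}\|f\|_{L^\infty}\|g\|_{L^\infty}$, while $\|S_j h\|_{L^\infty} \le \|K_j\|_{L^\infty}\|h\|_{L^1} \lesssim \|K_j\|_{L^\infty}$. Truncating the pointwise bound $(1+|x|)^{-n/2+n/(2(s-1))}$ of Lemma~\ref{kernel_sneq1}(2) by the rapid-decay cutoff at $|x|\approx R$ yields
\[
\|K_j\|_{L^\infty} \lesssim
\begin{cases} 2^{jn(2-s)/2}, & 1<s\le 2,\\ 1, & s>2,\end{cases}
\]
and multiplying out gives $2^{jn(s-1)}\cdot 2^{jn(2-s)/2} = 2^{jns/2}$ for $1<s\le 2$ and $2^{jn(s-1)}$ for $s>2$, both equal to $2^{-jm_s}$. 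For the far part, dyadically decompose $\{|x|\ge 2CR\} = \bigcup_{k:\,2^k\ge 2CR} E_k$ as in \eqref{annulus}. On each $E_k$ the support condition on $h$ gives $|x-y|\ge |x|/2\ge CR$, so Lemma~\ref{kernel_sneq1}(2) produces $\|S_j h\|_{L^\infty(E_k)} \lesssim 2^{-k(n+s)}$, while the localized $L^2$ estimate (with $A = 2^{k+1}$, so $A+R\approx 2^{k+1}$) yields $\|S_j f\cdot S_j g\|_{L^1(E_k)} \lesssim 2^{kn}\|f\|_{L^\infty}\|g\|_{L^\infty}$; summing produces $\sum_{2^k\ge 2CR} 2^{-ks} \lesssim R^{-s} = 2^{-js(s-1)}$, which is easily dominated by $2^{-jm_s}$ in both regimes.

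The main obstacle is the $L^\infty$-bound on $K_j$ in the two regimes $1<s\le 2$ and $s>2$: the pointwise estimate of Lemma~\ref{kernel_sneq1}(2) grows with $|x|$ when $s<2$, and only the rapid-decay cutoff at $|x|\approx R=2^{j(s-1)}$ keeps the supremum finite. Cleanly interfacing these two bounds is precisely what produces the transition in $m_s$ at $s=2$; once this is handled, everything else is a direct adaptation of the argument of Proposition~\ref{s<1}.
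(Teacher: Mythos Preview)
Your proposal is correct and follows essentially the same approach as the paper. The paper makes the identical decomposition into $\{|x|\le C2^{j(s-1)+2}\}$ and the dyadic annuli $E_k$ beyond, proves the same localized $L^2$ bound $\|S_j f\|_{L^2(|x|\le A)}\lesssim A^{n/2}\|f\|_{L^\infty}$ for $A\gtrsim 2^{j(s-1)}$ via the Plancherel/rapid-decay splitting you describe, and obtains the same bound $\|S_j h\|_{L^\infty(A_j)}\lesssim \max\{1,2^{j(n-ns/2)}\}$ from Lemma~\ref{kernel_sneq1}(2); the only cosmetic difference is that the paper phrases the last estimate as a bound on $S_j h$ restricted to $A_j$ rather than as a global bound on $\|K_j\|_{L^\infty}$, but the computation and the resulting exponent $2^{jn(2-s)/2}$ versus $1$ are identical.
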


\begin{proof}
Since $m<m_{s}$ is assumed,
in order to obtain \eqref{goal_sneq1},
it is sufficient to prove that
\begin{align} \label{goal_s>1}
\big\| S_j f \cdot S_j g \cdot S_j h \big\|_{L^1(\R^n)}
\lesssim
2^{- j m_s}
\|f\|_{L^\infty}
\|g\|_{L^\infty}
\end{align}
holds for sufficiently large $j \in \N_{0}$.
In fact, for $j \lesssim 1$, \eqref{goal_s>1} immediately follows from H\"older's inequality,
\eqref{L1toL1}, and \eqref{LinftytoLinfty}.

To obtain this,
we first decompose 
the left hand side as
\begin{align*}
\big\| \cdots \big\|_{L^1(\R^n)}
\le 
\big\| \cdots \big\|_{L^1( A_{j} )}
+\sum_{k \in \N : 2^{k} \ge C2^{j(s-1)+1}}
\big\| \cdots \big\|_{L^1( E_k )} ,
\end{align*}
where the set $A_j$ is defined by
\begin{align*}
A_{j} = \{ x \in \R^n \mid |x| \le C 2^{j(s-1)+2} \}
\end{align*}
with
the constant $C$ appearing in Lemma \ref{kernel_sneq1} (2),
and the set $E_k$ is defined by \eqref{annulus}.
Then, 
the following inequalities
immediately complete 
the proof of \eqref{goal_s>1}:
\begin{align} \label{Aj_s>1}
&
\big\| S_j f \cdot S_j g \cdot S_j h \big\|_{L^1( A_{j} )}
\lesssim 
2^{- j m_{s}}
\|f\|_{L^\infty}
\|g\|_{L^\infty},
\\
&
\big\| S_j f \cdot S_j g \cdot S_j h \big\|_{L^1(E_{k})} 
\lesssim 
2^{-ks}
\|f\|_{L^\infty}
\|g\|_{L^\infty}
\quad
\text{if $k \in \N$ satisfies $2^{k} \ge C2^{j(s-1)+1}$} . 
\label{Bk_s>1}
\end{align}

{\it Proof of\/} \eqref{Aj_s>1}.
We first display the two inequalities
which conclude \eqref{Aj_s>1}:
\begin{align} \label{Aj_f_s>1}
\| S_{j} f \|_{L^2( |x| \le A )} 
\lesssim 
A^{\frac{n}{2}} 
\|f\|_{L^{\infty}} 
\quad
\text{if $A$ satisfies $A \ge C 2^{j(s-1)+ 1}$} 
\end{align}
and
\begin{align} \label{Aj_h_s>1}
\| S_{j} h \|_{L^{\infty}(A_{j})} 
\lesssim 
\max\{ 1, 2^{j(n-\frac{ns}{2})}\} .
\end{align}
If these are proved,
we have by
H\"older's inequality,
\eqref{Aj_f_s>1} with $A=C 2^{j(s-1)+ 2}$,
and \eqref{Aj_h_s>1}
\begin{align*}
&
\big\| S_j f \cdot S_j g \cdot S_j h \big\|_{L^1( A_{j} )}
\le
\| S_{j} f \|_{L^2( A_{j} )} \, 
\| S_{j} g \|_{L^2( A_{j} )}\, 
\| S_{j} h \|_{ L^{\infty} (\R^n) }
\\
& 
\lesssim 
2^{j \frac{n(s-1)}{2}} 
\|f\|_{L^{\infty}}
\cdot
2^{j \frac{n(s-1)}{2}} 
\|g\|_{L^{\infty}} 
\cdot
\max\{ 1, 2^{j(n-\frac{ns}{2})}\} 
=
2^{- j m_s}  
\|f\|_{L^{\infty}} \|g\|_{L^{\infty}},
\end{align*}
which implies \eqref{Aj_s>1}.

Now, we first show \eqref{Aj_f_s>1}.
Decompose $f$ as
\begin{align*} 
f = f \ichi_{\{ |x| \le 2A \}} + f \ichi_{\{ |x| > 2A \}}
= f_{A}^{0} + f_{A}^{1} .
\end{align*}
For the factor $f_{A}^{0}$,
Plancherel's theorem gives
\begin{align*}
\| S_{j} f_{A}^{0} \|_{L^2( |x| \le A )} 
\le
\| S_{j} f_{A}^{0} \|_{L^2(\R^n)} 
\le
\| f_{A}^{0} \|_{L^2(\R^n)} 
\lesssim
A^{\frac{n}{2}} 
\|f\|_{L^{\infty}} .
\end{align*}
For the factor $f_{A}^{1}$,
since $|y| \approx |x-y| \ge A \ge C2^{j(s-1)}$
for $|y| > 2A$ and $|x| \le A$,
we have
by Lemma \ref{kernel_sneq1} (2)
\begin{align*}
\| S_{j} f_{A}^{1} \|_{L^2( |x| \le A )} 
&\le
\Big\| \int_{|y| > 2A} 
\Big| \Big( e^{ i |\xi|^{s} } \theta (2^{-j} \xi ) \Big)^{\vee} (x-y) \Big| 
\, 
|f(y)| \,dy \Big\|_{L^2( |x| \le A )} 
\\
&
\lesssim
\Big\| \int_{|y| > 2A} |y|^{-(n+s)} \,dy \Big\|_{L^2( |x| \le A )} 
\|f\|_{L^{\infty}}
\lesssim
A^{\frac{n}{2}}
\|f\|_{L^{\infty}}.
\end{align*}
Combining these two inequalities,
we obtain \eqref{Aj_f_s>1}.

We next consider \eqref{Aj_h_s>1}.
Observe that 
$- \frac{n}{2} + \frac{n}{2(s-1)}$
is nonnegative
if $1 < s \le 2$
and is negative if $s > 2$.
Since $1 +|x-y| \lesssim 2^{j(s-1)}$ if $|x| \le C2^{j(s-1)+2}$ and $|y| \le r \le 1$,
we have by
Lemma  \ref{kernel_sneq1} (2)
\begin{align*}
\| S_{j} h \|_{L^{\infty}(A_{j})}
&\le 
\Big\| \int_{|y| \le r} 
\Big| \Big( e^{ i |\xi|^{s} } \theta (2^{-j} \xi ) \Big)^{\vee} (x-y) \Big| 
\, 
|h(y)| \,dy 
\Big\|_{L^\infty( A_{j} )} 
\\&\lesssim 
\Big\| \int_{|y| \le r} 
\big( 1+|x-y| \big)^{ - \frac{n}{2} + \frac{n}{2(s-1)} }
\, 
|h(y)| \,dy \Big\|_{L^\infty( A_{j} )} 
\lesssim
\max\{ 1, 2^{j(n-\frac{ns}{2})}\},
\end{align*}
which yields \eqref{Aj_h_s>1}.

{\it Proof of\/} \eqref{Bk_s>1}.
We prepare the inequality
\begin{align} \label{Bk_h_s>1}
\| S_{j} h \|_{L^{\infty} (E_k)}
\lesssim
2^{-k(n+s)} 
\quad
\text{if $k \in \N$ satisfies that $2^{k} \ge C2^{j(s-1)+1}$} .
\end{align}
Once this is proved,
we have 
by H\"older's inequality,
\eqref{Aj_f_s>1} with $A=2^{k+1}$,
and \eqref{Bk_h_s>1}
\begin{align*}
\big\| S_j f \cdot S_j g \cdot S_j h \big\|_{L^1(E_k)} 
&
\le
\| S_{j} f \|_{L^2(E_k)}
\| S_{j} g \|_{L^2(E_k)}
\| S_{j} h \|_{L^{\infty} (E_k)} 
\\
&
\lesssim
2^{k \frac{n}{2}} 
\|f\|_{L^{\infty}}
\cdot
2^{k \frac{n}{2}} 
\|g\|_{L^{\infty}}
\cdot
2^{-k(n+s)} 
=
2^{-ks} 
\|f\|_{L^{\infty}}
\|g\|_{L^{\infty}}
\end{align*}
if $k$ satisfies that $2^{k} \ge C2^{j(s-1)+1}$,
which implies \eqref{Bk_s>1}.

Now, we shall show \eqref{Bk_h_s>1}.
Observe that
$|x| \approx |x-y| > 2^{k-1} \ge C 2^{j(s-1)}$
if 
$x \in E_k$, 
$|y| \le 1$,
and $2^{k} \ge C2^{j(s-1)+1}$.
Then, since
$\supp h \subset \{ |x| \le r \}$ with $0 < r \le 1$,
we have by Lemma \ref{kernel_sneq1} (2)
\begin{align*}
\| S_{j} h \|_{L^{\infty} (E_k)}
&=
\Big\| \int_{|y| \le r} 
\Big| \Big( e^{ i |\xi|^{s} } \theta (2^{-j} \xi ) \Big)^{\vee} (x-y) \Big| 
\, 
|h(y)| \,dy 
\Big\|_{L^{\infty}( E_k )} 
\\&\lesssim
\Big\| \int_{|y| \le r} |x|^{-(n+s)} |h(y)| \,dy \Big\|_{L^{\infty} ( E_k )} 
\lesssim
2^{-k(n+s)}, 
\end{align*}
which finishes the proof of \eqref{Bk_h_s>1}.
Thus we complete the proof of Proposition \ref{thm_s>1}.
\end{proof}

\section{Sharpness of the order $m_s$} \label{sharpness}

We take two functions $ \theta, \phi \in C^\infty_0(\R^n)$ 
such that
\begin{align*}
&
\theta(\xi) = 1 
\;\;\text{on}\;\;
\{1/8 \le |\xi| \le 8\} ,
\quad
\supp \theta \subset \{1/10 \le |\xi| \le 10\} ,
\\&
\phi(\xi) = 1 
\;\;\text{on}\;\;
\{ |\xi| \le 2\},
\quad
\supp \phi \subset \{ |\xi| \le 3\} .
\end{align*}
Let $m \in \R$.
For $j \in \N$, we define $\sigma_j$ by
\[
\sigma_j(\xi, \eta)
=
2^{jm}
\theta(2^{-j} \xi)
\phi(2^{-j}\eta).
\]
Since $1 + |\xi| + |\eta| \approx 2^{j}$ if 
$(\xi, \eta) \in \supp \theta(2^{-j} \cdot) \times \supp \phi(2^{-j} \cdot)$, 
it holds that 
\begin{align*}
\big|
\partial^{\alpha}_{\xi}
\partial^{\beta}_{\eta}
\sigma_j(\xi, \eta)
\big|
\le
C_{\alpha, \beta}
(1+|\xi|+|\eta|)^{m-|\alpha|-|\beta|}
\end{align*}
uniformly in $j \in \N$,
and hence we have $\sigma_j \in S^m_{1, 0}(\R^{2n})$.
Therefore, if the claim
\begin{align} \label{goal_LinfLinfbmo}
e^{i(|\xi|^s + |\eta|^s + |\xi + \eta|^s)}
\sigma(\xi, \eta)
\in
\calM(L^\infty \times L^\infty \to bmo)
\;\;
\text{for all}
\;\;
\sigma \in S^m_{1, 0}(\R^{2n})
\end{align}
holds, then we see that the inequality
\begin{align} \label{dualform}
2^{jm} 
\bigg|
\int
e^{i|D|^s}
\theta(2^{-j}D) f(x)
\cdot
e^{i|D|^s}
\phi(2^{-j}D) g(x)
\cdot
e^{i|D|^s}
h(x)
\,
dx
\bigg|
\le
c 
\|f\|_{L^\infty}
\|g\|_{L^\infty}
\|h\|_{h^1}
\end{align}
holds uniformly in $j \in \N$.

On the other hand,
if we define  $\sigma^{*1}_j$ by 
\[
\sigma^{*1}_j(\xi, \eta) = \sigma_j(-\xi-\eta, \eta) 
= 2^{jm} \theta(2^{-j}(-\xi-\eta)) \phi(2^{-j} \eta),
\] 
then it holds that $\sigma^{*1}_j \in S^m_{1, 0}(\R^{2n})$ uniformly in $j$. 
Hence, 
by taking $\sigma = \sigma^{*1}_j$ in \eqref{goal_LinfLinfbmo},
we see that the claim \eqref{goal_LinfLinfbmo} also implies that the inequality
\begin{align} \label{dualform*}
2^{jm} 
\bigg|
\int
e^{i|D|^s} f(x)
\cdot
e^{i|D|^s}
\phi(2^{-j}D) g(x)
\cdot
e^{i|D|^s}
\theta(2^{-j}D)
h(x)
\,
dx
\bigg|
\le
c 
\|f\|_{L^\infty}
\|g\|_{L^\infty}
\|h\|_{h^1}
\end{align}
holds uniformly in $j \in \N$.
\subsection{Case $s < 1$}

In this subsection, we assume that $s < 1$.
We shall prove that 
the condition $m \le m_s$ is necessary 
to hold the inequality
\begin{align} \label{strongerineq}
2^{jm}
\big\|
e^{i|D|^s} f
\cdot
e^{i|D|^s}
\phi(2^{-j}D) g
\cdot
e^{i|D|^s} 
\theta(2^{-j}D) h
\big\|_{L^1}
\le
c
\|f\|_{L^\infty}
\|g\|_{L^\infty}
\|h\|_{h^1},
\end{align}
which is stronger than \eqref{dualform*}.

\begin{prop} \label{prop_s<1}
Let $0 < s < 1$ and $m \in \R$.
If \eqref{strongerineq} holds, then $m \le m_s$.
\end{prop}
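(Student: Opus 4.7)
The plan is to exhibit explicit test functions $f, g \in L^\infty$ and $h \in h^1$, each with norm bounded by a constant independent of $j$, for which
\[
\bigl\|e^{i|D|^s}f \cdot e^{i|D|^s}\phi(2^{-j}D) g \cdot e^{i|D|^s}\theta(2^{-j}D) h\bigr\|_{L^1} \gtrsim 2^{-jm_s}
\]
for all sufficiently large $j$; substituting into \eqref{strongerineq} and sending $j \to \infty$ then forces $m \le m_s$. The critical scale is $r = 2^{-j(1-s)}$, at which Proposition \ref{prop_psi_KMST1} yields $|K_j(x)| \gtrsim 2^{jn(1-s/2)}$ on the annulus $\{a' r \le |x| \le b' r\}$; note that $a', b' < 1$ for $s < 1$, so this annulus lies inside $B(0, r)$.

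For $h$, I would take $h = (\psi(2^{-j}\xi))^\vee = 2^{jn}\check{\psi}(2^j\cdot)$. This is a Schwartz function with mean zero (since $\psi(0) = 0$) essentially localized at scale $2^{-j}$, whence $\|h\|_{h^1} \lesssim 1$ uniformly in $j$. Since $\theta(2^{-j}\cdot) \equiv 1$ on $\supp \psi(2^{-j}\cdot)$, we obtain $e^{i|D|^s}\theta(2^{-j}D) h = K_j$.

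For $f$ and $g$, choose a non-negative $\widetilde{\psi} \in C_0^\infty(\R^n)$ with $\widetilde{\psi} \equiv 1$ on $\{|y| \le b'\}$ and $\supp \widetilde{\psi} \subset \{|y| \le 1\}$, define $G(x) = r^{-ns/2}\widetilde{\psi}(x/r)$, and set $f = g = e^{-i|D|^s}G$. Then $e^{i|D|^s}f = G$ by construction; since $\widehat{G}(\xi) = r^{n(1-s/2)}\widehat{\widetilde{\psi}}(r\xi)$ is essentially concentrated in $\{|\xi| \lesssim r^{-1} = 2^{j(1-s)}\} \subset \{\phi(2^{-j}\cdot) \equiv 1\}$, the filter $\phi(2^{-j}D)$ acts as the identity on $g$ up to a rapidly decaying error, giving $e^{i|D|^s}\phi(2^{-j}D) g = G$ modulo negligible terms. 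The bound $\|f\|_{L^\infty} \lesssim 1$ reduces, via the rescaling $f(x) = r^{-ns/2}\bigl(e^{-i\lambda|D|^s}\widetilde{\psi}\bigr)(x/r)$ with $\lambda = r^{-s} = 2^{js(1-s)}$, to the estimate $\bigl\|e^{-i\lambda|D|^s}\widetilde{\psi}\bigr\|_{L^\infty} \lesssim \lambda^{-n/2}$ for large $\lambda$. A stationary phase analysis of $\int e^{iy\eta - i\lambda|\eta|^s}\widehat{\widetilde{\psi}}(\eta)\,d\eta$ confirms this: the critical point $\eta_{\mathrm{st}}$ satisfies $y = s\lambda|\eta_{\mathrm{st}}|^{s-2}\eta_{\mathrm{st}}$, the main contribution comes from $|\eta_{\mathrm{st}}| \approx 1$ (equivalently $|y| \approx \lambda$) with magnitude $\approx \lambda^{-n/2}$, and the Schwartz decay of $\widehat{\widetilde{\psi}}$ combined with non-stationary phase bounds takes care of the remaining regions.

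Putting these ingredients together, on the annulus where $|x|/r \le b'$ we have $G(x) = r^{-ns/2}$, and the triple product satisfies
\[
\bigl|e^{i|D|^s}f \cdot e^{i|D|^s}\phi(2^{-j}D) g \cdot e^{i|D|^s}\theta(2^{-j}D) h\bigr|(x) \gtrsim r^{-ns}\, 2^{jn(1-s/2)}
\]
there; integrating over the annulus (of volume $\approx r^n$) yields $c\, r^{-ns}\, 2^{jn(1-s/2)}\, r^n = c\, 2^{jns(3-2s)/2} = c\, 2^{-jm_s}$, as desired. The main technical obstacle is the stationary phase estimate $\|e^{-i\lambda|D|^s}\widetilde{\psi}\|_{L^\infty} \lesssim \lambda^{-n/2}$ at large $\lambda$, which requires careful treatment of the degeneracy of the Hessian of $|\eta|^s$ near $\eta = 0$ and of the non-stationary regime; a secondary point is quantitative control of the error made in replacing $\phi(2^{-j}D) g$ by $g$.
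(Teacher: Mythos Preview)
Your plan is sound and reaches the correct threshold, but the paper makes one choice that dissolves both of the obstacles you flag. Both arguments take $h=(\psi(2^{-j}\xi))^\vee$ and invoke the lower bound \eqref{Kjxradial} for $K_j$ on the annulus $\{a'\le 2^{j(1-s)}|x|\le b'\}$; the difference is in how $f=g$ is built. You choose $G$ to be a \emph{physical-space} bump at scale $r=2^{-j(1-s)}$ and set $f=e^{-i|D|^s}G$; this forces you to (i) quantify the error $\phi(2^{-j}D)g-g$ coming from the fact that $\widehat G$ is not compactly supported, and (ii) prove the stationary-phase bound $\|e^{-i\lambda|D|^s}\widetilde\psi\|_{L^\infty}\lesssim\lambda^{-n/2}$, which is true but not supplied by the paper and is delicate near $\eta=0$. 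The paper instead takes
\[
f=g=F_j:=\bigl(e^{-i|\xi|^s}\psi(2^{-j(1-s)}\xi)\bigr)^\vee
\]
with $\psi$ supported in an \emph{annulus}. Then $\widehat{F_j}$ lies inside $\{\phi(2^{-j}\cdot)=1\}$, so the filter error vanishes identically; $e^{i|D|^s}F_j=2^{jn(1-s)}\check\psi(2^{j(1-s)}\cdot)$ exactly; and the needed bound $\|F_j\|_{L^\infty}\lesssim 2^{j(1-s)(n-ns/2)}$ is read off directly from Proposition~\ref{prop_psi_KMST1} applied at dilation $2^{j(1-s)}$, with no new oscillatory-integral work. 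The lower bound on the trilinear $L^1$-norm then comes from integrating $|\check\psi(2^{j(1-s)}x)|^2\,|K_j(x)|$ over the annulus, which matches your computation. A minor further point: the paper's $F_j$ is genuinely Schwartz (its Fourier transform is smooth and supported away from the origin), whereas your $\widehat f=e^{-i|\xi|^s}\widehat G$ fails to be smooth at $\xi=0$, so $f\notin\calS$ and an additional approximation step would be needed before plugging into \eqref{strongerineq}.

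If you still wish to push your version through, note that your key estimate can be recovered from the paper's tools rather than a fresh stationary-phase argument: rescale $\xi=\lambda^{-1/s}\xi'$ to reduce to $\|(e^{-i|\xi'|^s}\widehat{\widetilde\psi}(2^{-J}\xi'))^\vee\|_{L^\infty}\lesssim 2^{J(n-ns/2)}$ with $2^J=\lambda^{1/s}$, then split $\widehat{\widetilde\psi}$ into a compactly supported piece (Lemma~\ref{kernel_sneq1_norm}) and a Schwartz tail supported away from $0$ (dyadic sum via Proposition~\ref{prop_psi_KMST1}). But this is precisely the detour that the annular choice avoids.
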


\begin{proof}
Let $\psi \in C^{\infty}_0(\R^n)$ be such that 
\begin{align*}
\supp \psi \subset \{1/2 \le |\xi| \le 2\},
\quad
\psi \neq 0
\;\;\text{on}\;\;
\{2/3 \le |\xi| \le 3/2\}.
\end{align*}
We set
\begin{align*}
F_j(x)
=
\Big(e^{-i|\xi|^s} \psi(2^{-j(1-s)} \xi)\Big)^{\vee}(x)
\quad
\text{and}
\quad
G_j(x)
=
\Big(\psi(2^{-j} \xi)\Big)^{\vee}(x).
\end{align*}
Obviously, $\|G_j\|_{h^1} \lesssim 1$.
Moreover, 
since $s < 1$, we have by Proposition \ref{prop_psi_KMST1}
\begin{align*}
\big\| F_j \big\|_{L^\infty}
\lesssim
2^{j(1-s)(n - \frac{ns}{2})}.
\end{align*}
On the other hand, it follows from the choice of the functions $\theta$ and
$\phi$ that  
\begin{align*}
&
e^{i|D|^s} \phi(2^{-j} D)F_j(x)
=
e^{i|D|^s} F_j(x)
=
\Big(
\psi(2^{-j(1-s)} \xi)
\Big)^{\vee}(x)
=
2^{j(1-s)n} (\psi)^{\vee} (2^{j(1-s)}x),
\\
&
e^{i|D|^s} \theta(2^{-j} D)G_j(x)
=
\Big(
e^{i|\xi|^s}\psi(2^{-j} \xi)
\Big)^{\vee}(x).
\end{align*}
Hence, if we test \eqref{strongerineq} to the functions 
$(f, g, h) = (F_j, F_j, G_j)$,
then we have
\begin{align*}
2^{j m}
2^{2j(1-s)n}
\Big\|
\big\{
(\psi)^{\vee}(2^{-j(1-s)} \cdot)
\big\}^{2}
\cdot
\Big(
e^{i|\xi|^s}\psi(2^{-j} \xi)
\Big)^{\vee}
\Big\|_{L^1}
\lesssim
2^{2 j (1-s)(n - \frac{ns}{2})}.
\end{align*}
Since we now assume that $\psi \neq 0$ on $\{2/3 \le |\xi| \le 3/2\}$,
we have by \eqref{Kjxradial} of Proposition \ref{prop_psi_KMST1} 
\begin{align*}
&
\Big\|
\big\{
(\psi)^{\vee}(2^{j(1-s)} \cdot)
\big\}^{2}
\cdot
\Big(
e^{i|\xi|^s}\psi(2^{-j} \xi)
\Big)^{\vee}
\Big\|_{L^1}
\\
&
\ge
\int_{s(\frac{3}{2})^{s-1} \le 2^{j(1-s)}|x| \le s(\frac{3}{2})^{1-s}}
\big|
(\psi)^{\vee}(2^{j(1-s)} x)
\big|^2
\,
\Big|
\Big(
e^{i|\xi|^s}\psi(2^{-j} \xi)
\Big)^{\vee} (x)
\Big|
\,
dx
\\
&
\approx
2^{j(n - \frac{ns}{2})}
\int_{s(\frac{3}{2})^{s-1} \le 2^{j(1-s)}|x| \le s(\frac{3}{2})^{1-s}}
\big|
(\psi)^{\vee}(2^{j(1-s)} x)
\big|^2
\,
dx
\\
&
\approx
2^{j(n - \frac{ns}{2})}
2^{-jn(1-s)}.
\end{align*}
for sufficiently large $j$.
Hence we obtain
\begin{align*}
2^{j m}
2^{2j(1-s)n}
2^{j(n - \frac{ns}{2})}
2^{-jn(1-s)}
\lesssim
2^{2 j (1-s)(n - \frac{ns}{2})},
\end{align*}
which concludes that $m \le -ns/2 -ns(1-s) = m_s$.
\end{proof}

\begin{rem}
It should be emphasized that 
although \eqref{goal_LinfLinfbmo} implies \eqref{dualform*}
but the implication \eqref{dualform*} $\Rightarrow$ \eqref{strongerineq}
is highly nontrivial (or may not be true). 
Hence Proposition \ref{prop_s<1} is not enough to 
deduce the claim that \eqref{goal_LinfLinfbmo} holds only when $m \le m_s$. 

\end{rem}

\subsection{Case $s > 1$}

In this subsection, we shall prove that 
the claim \eqref{goal_LinfLinfbmo} holds only if $m \le m_s$ 
when $s > 1$. 

\begin{prop} \label{prop_s>1}
Let $1 < s < \infty$ and $m \in \R$.
If \eqref{goal_LinfLinfbmo} holds, 
then $m \le m_s$.
\end{prop}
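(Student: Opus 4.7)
Following the strategy of Proposition \ref{prop_s<1}, I would use the duality reduction already carried out at the start of this section: applying \eqref{goal_LinfLinfbmo} both to the symbol $\sigma_j$ and to its reflected version $\sigma_j^{*1}$ yields the two trilinear dual inequalities \eqref{dualform} and \eqref{dualform*}, uniformly in $j \in \N$. To force $m \le m_s$, it then suffices to exhibit, for each large $j$, test functions $f, g \in L^\infty$ and $h \in h^1$ with $\|f\|_{L^\infty}, \|g\|_{L^\infty}, \|h\|_{h^1} \lesssim 1$ uniformly in $j$, such that the trilinear integral on the left-hand side of \eqref{dualform*} has modulus $\gtrsim 2^{-jm_s}$.

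In the regime $1 < s \le 2$, where $m_s = -ns/2$, I would take $h$ of the form $h(x) = 2^{jn}\chi(2^j(x - x_0))$, with $\chi \in C^\infty_0$ of vanishing mean and $\hat\chi$ essentially equal to the bump $\psi$ of Proposition \ref{prop_psi_KMST1}, and with $|x_0| \sim 2^{j(s-1)}$. Then $\|h\|_{h^1}\lesssim 1$, and by Proposition \ref{prop_psi_KMST1} the factor $e^{i|D|^s}\theta(2^{-j}D)h$ has magnitude $\sim 2^{j(n-ns/2)}$ on the caustic annulus centered at $x_0$, which by the choice of $x_0$ passes through the origin. For $f, g$ I would choose bounded functions whose images $e^{i|D|^s}f$ and $e^{i|D|^s}\phi(2^{-j}D)g$ are localized bumps at the origin with peak value $\sim 2^{jns/2}$ (the kernel bound $\|K_j\|_{L^1}\sim 2^{jns/2}$ of Lemma \ref{kernel_sneq1_norm} is saturated by taking $f$ equal to the sign of a translate of $K_j$). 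A single unimodular rotation of $f$ and $g$ aligns their phases with the stationary-phase phase of $e^{i|D|^s}\theta(2^{-j}D)h$ at the origin, yielding $|I_j| \gtrsim 2^{jns/2}$ and hence $m \le -ns/2$.

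In the regime $s > 2$, where $m_s = -n(s-1) < -ns/2$, the preceding single-point test only forces the weaker bound $m \le -ns/2$. The additional loss $-n(s-2)/2$ reflects the fact that for $s > 2$ the caustic annulus of radius $2^{j(s-1)}$ has the large volume $\sim 2^{jn(s-1)}$. I would therefore construct $f, g$ so that $e^{i|D|^s}f \cdot e^{i|D|^s}\phi(2^{-j}D)g$ is of size $\sim 1$ across the \emph{entire} caustic annulus of $h$ (not merely at a single point), and take $h$ to be a large-scale $h^1$-atom $r^{-n}\chi_0(r^{-1}(\cdot - x_0))$ with $r = 2^{j(s-1)} > 1$ (no vanishing-mean condition being needed at this scale) supported on the annulus. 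Summing the triple product over the annular volume $\sim 2^{jn(s-1)}$ then produces the additional factor, giving $|I_j| \gtrsim 2^{jn(s-1)}$ and therefore $m \le -n(s-1) = m_s$.

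The main technical obstacle is the phase-alignment step underlying the lower bound on $|I_j|$. Each of the three $e^{i|D|^s}$-factors carries an oscillatory phase of order $2^{js}$ coming from stationary phase on its kernel, and one must verify that these phases combine constructively at the relevant spatial locus (a single point in the first regime, a curved caustic shell in the second). For $s > 2$ this requires matching the Hessian-scale phase variation across the whole caustic annulus and may rely on the sharp lower bound \eqref{Kjxradial} together with a local Airy-type stationary-phase computation to secure the constructive interference.
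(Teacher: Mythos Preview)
Your approach differs substantially from the paper's and leaves the hardest step---phase alignment---undone. The paper avoids this difficulty entirely by an anti-phase trick: rather than choosing $f, g, h$ directly and then wrestling with the oscillation produced by $e^{i|D|^s}$, one takes test functions of the form $F_j^\ell(x) = (e^{-i|\xi|^s}\psi_\ell(2^{-j}\xi))^\vee(x)$, so that the operator $e^{i|D|^s}$ exactly cancels the built-in phase and returns the non-oscillatory bump $(\psi_\ell(2^{-j}\xi))^\vee$. The trilinear integral in \eqref{dualform} then becomes $2^{jm}2^{2jn}\iint \psi_1(\xi+\eta)\psi_2(\xi)\psi_3(\eta)\,d\xi d\eta$, a positive quantity of order $2^{jm+2jn}$ with no phase to align. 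The only inputs needed on the right-hand side are $\|F_j^\ell\|_{L^\infty} \lesssim 2^{j(n-ns/2)}$ and $\|F_j^3\|_{h^1}\approx\|F_j^3\|_{L^1} \lesssim 2^{jns/2}$, both immediate from Proposition \ref{prop_psi_KMST1}. For $s > 2$ the paper uses \eqref{dualform} (not \eqref{dualform*}) with $f = g = F_j$ and $h = F_0 = (e^{-i|\xi|^s}\varphi(\xi))^\vee$ a fixed unit-scale low-frequency function with $\|F_0\|_{h^1}\lesssim 1$; again the phases cancel and the integral reduces to $2^{jm}2^{2jn}\int ((\psi)^\vee(2^jx))^2(\varphi)^\vee(x)\,dx \gtrsim 2^{jm+jn}$, which is compared to $2^{2j(n-ns/2)}$.

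Your $s > 2$ construction also has a concrete error beyond the unresolved phase issue: you place a large-scale atom $h$ of radius $r = 2^{j(s-1)}$ in the $h$ slot of \eqref{dualform*}, but there $h$ is hit by $\theta(2^{-j}D)$, which projects onto frequencies $|\xi| \sim 2^j$. A spatial bump at scale $2^{j(s-1)}$ has Fourier transform concentrated at $|\xi| \lesssim 2^{-j(s-1)}$, so $\theta(2^{-j}D)h$ is negligibly small and the left-hand side collapses. Switching to \eqref{dualform} removes $\theta(2^{-j}D)$ from $h$, but you would still need the product $e^{i|D|^s}\theta(2^{-j}D)f \cdot e^{i|D|^s}\phi(2^{-j}D)g$ to interfere constructively with $e^{i|D|^s}h$ across a curved shell on which each factor's phase varies by order $2^{js}$; this is far harder than the single non-oscillatory computation the paper performs, and is not the kind of step one can wave through.
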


\begin{proof}
As shown in the beginning of this section,
 \eqref{goal_LinfLinfbmo} implies
\eqref{dualform}.
Hence, 
it is sufficient to prove that \eqref{dualform} 
holds only if $m \le m_s$.

{\it Proof of the case $1 < s \le 2$.}
Take real-valued radial functions
$\psi_1, \psi_2, \psi_3 \in C^\infty_0(\R^n)$ 
such that 
\begin{align*}
&
\psi_1 \ge 0, 
\quad
\psi_1 = 1 \;\;\text{on}\;\;
\{ 1/4 \le |\xi| \le 4\}, 
\quad
\supp \psi_1 \subset \{ 1/8 \le |\xi| \le 8 \},
\\&
\psi_2 \ge 0, 
\quad
\psi_2 = 1 \;\;\text{on}\;\;
\{ 2/3 \le |\xi| \le 3/2\}, 
\quad
\supp \psi_2 \subset \{ 1/2 \le |\xi| \le 2 \},
\\&
\psi_3 \ge 0, 
\quad
\psi_3 = 1 \;\;\text{on}\;\;
\{ 1/2 \le |\xi| \le 2\}, 
\quad
\supp \psi_3 \subset \{ 1/4 \le |\xi| \le 4 \},
\end{align*}
and set
\begin{align*}
F_{j}^{\ell} (x)
=
\Big(
e^{-i|\xi|^s}
\psi_{\ell} (2^{-j}\xi)
\Big)^{\vee} (x)
\end{align*}
for $j \in \N$ and $\ell = 1,2,3$.
Then, by Proposition \ref{prop_psi_KMST1},
we have 
\begin{align*}
\| F_{j}^{\ell} \|_{L^\infty}
\lesssim
2^{j(n-\frac{ns}{2})},
\;\;
\ell = 1,2,
\;\;
\text{and}
\;\;
\| F_{j}^{3} \|_{h^1}
\approx
\| F_{j}^{3} \|_{L^1}
\lesssim
2^{j\frac{ns}{2}},
\quad
j \in \N.
\end{align*}
From the choice of the functions $\theta$ and $\phi$,
we see that 
\begin{align*}
&
e^{i|D|^{s}} \theta(2^{-j} D) F^{1}_j(x)
=
\Big(
\psi_{1}(2^{-j}\xi)
\Big)^{\vee} (x),
\\
&
e^{i|D|^{s}} \phi(2^{-j} D) F^{2}_j(x)
=
\Big(
\psi_{2}(2^{-j}\xi)
\Big)^{\vee} (x).
\end{align*}
Now,
letting $(f,g,h) = (F_{j}^{1}, F_{j}^{2}, F_{j}^{3})$
in \eqref{dualform},
we can express the left hand side as
\begin{align*}
&
2^{jm}
\Big|
\int_{\R^n}
e^{i|D|^{s}} \theta(2^{-j} D) F^{1}_j(x) \cdot
e^{i|D|^{s}} \phi(2^{-j} D) F^{2}_j(x) \cdot
e^{i|D|^{s}} F^{3}_j(x) 
\,dx
\Big|
\\
&= 
2^{jm}
\int_{\R^n}
\Big(
\psi_{1}(2^{-j}\xi)
\Big)^{\vee} (x) \,
\Big(
\psi_{2}(2^{-j}\xi)
\Big)^{\vee} (x) \,
\Big(
\psi_{3}(2^{-j}\xi)
\Big)^{\vee} (x) \,
\,dx
\\
&
=2^{jm} 2^{2jn}
\iint_{\R^{2n}}
\psi_{1} (\xi+\eta) \,
\psi_{2} (\xi) \,
\psi_{3} (\eta)
\,d\xi d\eta
=: (\ast) ,
\end{align*}
which is estimated from below as
\begin{align*}
(\ast)
\ge 2^{jm} 2^{2jn}
\iint_{ \substack{ 1-2^{-10} \le |\xi| \le 1+2^{-10} \\ \frac{3}{2} -2^{-10} \le |\eta| \le \frac{3}{2} +2^{-10} } }
\psi_{1} (\xi+\eta) \,
\psi_{2} (\xi) \,
\psi_{3} (\eta)
\,d\xi d\eta
= A \, 2^{jm} 2^{2jn}
\end{align*}
for some constant $A$ depending only on $n$.
Therefore,
we have 
\[
2^{jm} 
2^{2jn} 
\lesssim
2^{j(n-\frac{ns}{2})}\;
2^{j(n-\frac{ns}{2})}\;
2^{j\frac{ns}{2}},
\]
which concludes that $m \le -ns/2$.


{\it Proof of the case $2 < s < \infty$.}
Take radial functions 
$\psi, \varphi \in C^\infty_0(\R^n)$
such that 
\begin{align*}
(\psi)^{\vee} \;\text{is real-valued}, 
\quad
&
| (\psi)^{\vee} | \ge 1 \;\;\text{on}\;\;
\{ |x| \le \delta\}, 
\quad
\supp \psi \subset \{ 1/2 \le |x| \le 2 \},
\\
( \varphi )^{\vee} \;\text{is non-negative}, 
\quad
&
| (\varphi )^{\vee} | \ge 1 \;\;\text{on}\;\;
\{ |x| \le \delta\}, 
\quad
\supp \varphi \subset \{ |x| \le 10 \},
\end{align*}
for some constant $\delta > 0$,
and set
\begin{align*}
 F_{j} (x) 
=
\Big(
e^{-i|\xi|^s}
\psi(2^{-j}\xi)
\Big)^{\vee} (x) ,
\quad
 F_{0} (x) 
=
\Big(
e^{-i|\xi|^s}
\varphi(\xi)
\Big)^{\vee} (x) .
\end{align*}
Then, 
we again have 
\begin{align*}
\| F_{j} \|_{L^\infty}
\lesssim
2^{j(n-\frac{ns}{2})}
\quad\text{and}\quad
\| F_{0} \|_{h^1}
\lesssim 1,
\end{align*}
and 
\begin{align*}
e^{i|D|^{s}} \theta(2^{-j} D) F_j(x)
=
e^{i|D|^{s}} \phi(2^{-j} D) F_j(x)
=
\Big(
\psi(2^{-j}\xi)
\Big)^{\vee} (x)
=
2^{jn} (\psi)^{\vee}(2^j x).
\end{align*}
Now,
set $(f,g,h) = (F_{j}, F_{j}, F_{0})$ in \eqref{dualform}.
Then  the left-hand side is estimated from below as 
\begin{align*}
&
2^{jm}
\Big|
\int_{\R^n}
e^{i|D|^{s}} \theta(2^{-j} D) F_j(x) \cdot
e^{i|D|^{s}} \phi(2^{-j} D) F_j(x) \cdot
e^{i|D|^{s}} F_0(x) 
\,dx
\Big|
\\
&= 
2^{jm}
2^{2jn}
\int_{\R^n}
\big( ( \psi )^{\vee} (2^{j} x) \big)^{2} \,
( \varphi )^{\vee} (x) 
\,dx
\ge 
2^{jm}
2^{2jn}
\int_{|2^{j} x| \le \delta}
\big( ( \psi )^{\vee} (2^{j} x) \big)^{2} \,
( \varphi )^{\vee} (x) 
\,dx
\ge 
A 2^{jm} 2^{jn}
\end{align*}
for some constant $A$ depending only on $n$ and $\delta$.
Therefore,
we conclude that
\[
2^{jm}
2^{jn}  
\lesssim
2^{j(n-\frac{ns}{2})} \;
2^{j(n-\frac{ns}{2})} ,
\]
which yields that $m \le - n(s-1)$.
\end{proof}

\begin{rem}
In the same way as above (or by duality),
we see that the order $m_s$ is also sharp for
$h^1 \times L^\infty \to L^1$
and 
$L^\infty \times h^1 \to L^1$
boundedness.
\end{rem}

\section*{Appendix}
In this appendix, we recall the atomic decomposition of $L^1$.
We say that a function $h \in L^1$ is an atom 
if there exist $x_0 \in \R^n$ and $0 < r \le 1$ such that 
\begin{align*}
\supp h \subset B(x_0, r),
\quad
\|h\|_{L^\infty} \le r^{-n}.
\end{align*}
Then we prove that 
for every $h \in L^1$ 
there exist 
a sequence of atoms $\{h_j\}$ and
a sequence of nonnegative real numbers $\{\lambda_j\}$
such that 
\[
h = \sum_{j} \lambda_j h_j,
\quad
\|h\|_{L^1} \approx \sum_{j} \lambda_j.
\]

Suppose $h \in L^{1}$. 
We define the sets $E_{k}$, $k \in \Z$, by 
\[
E_{k} = \{ x \in \R^n \mid 2^{k} \le |h(x)| < 2^{k+1} \} .
\]
For each $k \in \Z$, we choose a family 
of disjoint cubes 
$\{Q_{k, \nu}\}_{\nu}$ such that 
\[
\diam (Q_{k, \nu})\le 1, 
\quad 
E_{k} 
\subset 
\bigcup_{\nu} 
Q_{k, \nu}, 
\quad\text{and}\quad
|E_{k}| 
\le 
\sum_{\nu} 
\big| Q_{k, \nu} \big| 
\le 2 |E_{k}|.
\]
Let  
$c_{k,\nu}$ be the center of $Q_{k,\nu}$ and 
$r_{k,\nu}=\diam (Q_{k,\nu})$. 
Since $\{ E_{k} \}_{k}$ and $\{ Q_{k, \nu} \}_{\nu}$ 
are disjoint families,
we can write $h$ as
\[
h(x) = \sum_{k \in \Z} h(x) \, \ichi_{E_{k}}(x)
= \sum_{k \in \Z} \sum_{\nu \in \Z^n} h(x) \, \ichi_{E_{k}\cap Q_{k,\nu}}(x)
= \sum_{k \in \Z} \sum_{\nu \in \Z^n} \lambda_{k,\nu} \, h_{k,\nu}(x)
\]
with 
\[
\lambda_{k,\nu}
=
2^{k+1} r_{k,\nu}^{n},
\quad
h_{k,\nu}(x) = 
\frac{h(x) \, \ichi_{E_{k}\cap Q_{k,\nu}}(x) }
{2^{k+1} r_{k,\nu}^n }.
\]
Then, we see that
\[
\| h_{k, \nu} \|_{L^{\infty}} \le r_{k, \nu}^{-n},
\quad
\supp h_{k, \nu} 
\subset Q_{k, \nu} 
\subset B(c_{k,\nu}, r_{k,\nu}), 
\]
and thus each $h_{k,\nu}$ is an atom, 
and also 
\[
\sum_{k \in \Z} \sum_{\nu \in \Z^n} \lambda_{k,\nu}
=
\sum_{k \in \Z} \sum_{\nu \in \Z^n} 2^{k+1} r_{k,\nu}^n  
\approx 
\sum_{k \in \Z} \sum_{\nu \in \Z^n} 2^{k} |Q_{k, \nu}| 
\approx 
\sum_{k \in \Z} 2^{k} |E_{k}| 
\approx 
\| h \|_{L^1}. 
\]
This completes the proof.

\end{document}